\theoremstyle{plain}
\newtheorem{theorem}{Theorem}[section]
\newtheorem{assumption}[theorem]{Assumption}
\newtheorem{lemma}[theorem]{Lemma}
\newtheorem{lem}[theorem]{Lemma}
\newtheorem{prop}[theorem]{Proposition}
\theoremstyle{definition}
\newtheorem{defn}[theorem]{Definition}
\newtheorem*{term}{Terminology}
\theoremstyle{remark}
\newtheorem{remark}[theorem]{Remark}
\newtheorem{rmk}[theorem]{Remark}
\newcommand{\vphi}{\varphi}
\newcommand{\veps}{\varepsilon}
\newcommand{\pl}{\partial}
\newcommand{\na}{\nabla}
\newcommand{\lt}{\left}
\newcommand{\rt}{\right}
\newcommand{\rw}{\rightarrow}
\newcommand{\R}{\mathbb{R}}
\renewcommand{\tilde}{\widetilde}
\newcommand{\Lb}{\underline{L}}
\newcommand{\chib}{\underline{\chi}}
\newcommand{\tr}{\operatorname{tr}}
\title{Two rigidity results for surfaces in Schwarzschild spacetimes}
\author[P.-N. Chen]{Po-Ning Chen}
\address{Department of Mathematics, University of California, Riverside, CA, USA}
\email{poningc@ucr.edu}
\thanks{P.-N. Chen is supported by Simons Foundation collaboration grant \#584785}
\author[Y.-K. Wang]{Ye-Kai Wang}
\address{Department of Applied Mathematics, National Yang Ming Chiao Tung University,
Hsinchu, Taiwan}
\address{National Center for Theoretical Sciences\\No. 1 Sec. 4 Roosevelt Rd., National Taiwan
University\\Taipei,106, Taiwan}
\email{ykwang@math.nctu.edu.tw}
\thanks{Y.-K. Wang is supported by Taiwan NSTC grant
109-2628-M-006-001-MY3. We would like to thank professor Mu-Tao Wang for his encouragement.}
\begin{document}
\begin{abstract}
We prove two rigidity results for surfaces lying in the standard null hypersurfaces of Schwarzschild spacetime satisfying certain mean curvature type equations. The first is for the equation $\alpha_H = - d\log |H|$ studied in \cite{WWZ}. The second is for the mean curvature vector of constant norm. The latter is related to the Liouville and Obata Theorem in conformal geometry. \end{abstract}
\maketitle

\section{Introduction}
Alexandrov Theorem \cite{A} is a landmark in differential geometry. It asserts that any closed embedded hypersurface with constant mean curvature in Euclidean space is a round sphere. Later Reilly \cite{Reilly} and Ros \cite{Ros} (see also \cite{MS}) gave different proofs. The methods of proof--Alexandrov's moving plane method, Reilly formula, and Ros inequality--have become indispensable tools in partial differential equations and geometry.
 
In the seminal work \cite{B}, Brendle generalized the Alexandrov theorem by showing that any closed embedded hypersurface of constant mean curvature in a class of warped product manifolds must be umbilical; moreover, for (anti-)de Sitter Schwarzschild and Reissner-Nordstrom manifolds that are of interest in general relativity, he showed that umbilical hypersurfaces are spheres of symmetry. In \cite{WWZ}, Wang, Zhang, and the second-named author proved a spacetime Alexandrov theorem building upon Brendle's result.

Let's recall the setup of \cite{WWZ} which we adopted in the present work. Fix an integer $n \ge 3$. Consider a class of $(n+1)$-dimensional static spherically symmetric spacetimes $(V,\bar{g})$:
\begin{assumption}\label{spacetime_assumption}
\begin{align*}
\bar g = -f^2(r)dt^2 + \frac{1}{f^2(r)}dr^2 + r^2 g_{S^{n-1}}
\end{align*}
where the warping factor $f: I \subset [0,\infty) \rw (0,\infty)$ satisfies the differential inequality
\begin{align}\label{differential inequality}
\frac{f^2-1}{r^2} - \frac{ff'}{r} \leq 0. 
\end{align}
Here $g_{S^{n-1}}$ denotes the metric of the $(n-1)$-dimensional sphere with constant sectional curvature $1$.
\end{assumption}
The equality case of \eqref{differential inequality} produces the Lorentz space forms. Namely, in the case
\[
\frac{f^2-1}{r^2} - \frac{ff'}{r} = 0,
\]
we get $f = 1 - \kappa r^2$ for some constant $\kappa$. $(V,\bar{g})$ then corresponds to the Minkowski ($\kappa=0$), anti-de Sitter ($\kappa <0$), and de Sitter $(\kappa>0)$ spacetimes.

For any codimension 2 submanifold, we consider the connection 1-form of the normal bundle in the mean curvature direction $\alpha_H$
\begin{defn}
When the mean curvature vector is spacelike, we consider the unit spacelike normal $e_n^H = - \frac{H}{|H|}$ and the future-directed unit timelike normal $e_{n+1}^H$ that form an orthonormal basis of the normal bundle. Let 
\begin{align*}
\alpha_H(X) = \langle D_X e_n^H, e_{n+1}^H \rangle \quad \mbox{ for } X\in T\Sigma
\end{align*}
be the connection 1-form of the normal bundle. 
\end{defn}

\begin{term}
A {\it sphere of symmetry} is a sphere $t=t_0,r=r_0$ or its image under a Lorentz transformation when $(V,\bar g)$ is a Lorentz space form (see Subsection \ref{sphere of symmetry}). The null hypersurface emanating from a sphere of symmetry is called a {\it standard null cone}.
\end{term}

The main theorem of \cite{WWZ} is: 
\begin{theorem}\label{main WWZ}
Let $\Sigma$ be a spacelike codimension 2 submanifold in a static spherically symmetric spacetime satisfying Assumption \ref{spacetime_assumption}. Suppose
\begin{align}\label{CNNC}
\alpha_H = - d \log |H|
\end{align}
holds on $\Sigma$. Then $\Sigma$ lies in an outgoing standard null cone provided the past outgoing null hypersurface emanating from $\Sigma$ intersects a time slice at an embedded hypersurface of positive mean curvature. \end{theorem}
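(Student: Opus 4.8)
The plan is to reformulate the pointwise equation \eqref{CNNC} as a boost-invariant condition on the past outgoing null hypersurface, reduce the problem to a rigidity statement on a time slice, and then invoke the sharp Heintze--Karcher inequality underlying Brendle's Alexandrov theorem \cite{B}. First I would rewrite \eqref{CNNC} in a null frame. Writing $L = e_{n+1}^H + e_n^H$ and $\Lb = e_{n+1}^H - e_n^H$ for the two future null normals normalized by $\langle L,\Lb\rangle = -2$, a direct computation gives $\alpha_H(X) = \tfrac12\langle D_X L,\Lb\rangle =: \zeta(X)$, the torsion of the normal bundle, while in this mean curvature frame the null expansions satisfy $\tr\chi = \tr\chib = |H|$ (orienting so the expansion is positive). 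Under a boost $L \mapsto aL$, $\Lb \mapsto a^{-1}\Lb$ one computes $\zeta \mapsto \zeta - d\log a$ and $\tr\chi \mapsto a\,\tr\chi$, so that the one-form $\zeta + d\log \tr\chi$ is boost invariant. Hence \eqref{CNNC} is equivalent, in any rescaling, to
\[
\zeta = -\,d\log \tr\chi
\]
for the null data of the generator of the past outgoing null hypersurface $\mathcal N$ emanating from $\Sigma$ (up to the obvious sign bookkeeping fixed by the choice of generator). In particular $\zeta$ is exact, so after rescaling the generator one may arrange $\mathcal N$ to be torsion free; this exhibits the content of \eqref{CNNC} as a condition intrinsic to $\mathcal N$ rather than to the cross section $\Sigma$.

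Next I would reduce to the time slice. By hypothesis $\mathcal N$ meets a slice $\{t=t_0\}$ in an embedded hypersurface $\hat\Sigma$ with positive mean curvature $\hat H$ in the Riemannian warped product $M=\bigl(I\times S^{n-1},\,\tfrac{1}{f^2}dr^2 + r^2 g_{S^{n-1}}\bigr)$. Since a null hypersurface is ruled by its null geodesic generators, $\mathcal N$ is the null hypersurface generated by $\hat\Sigma$, and it is a standard null cone precisely when $\hat\Sigma$ is a sphere of symmetry $\{r=r_0\}$. Thus the assertion that $\Sigma$ lies in an outgoing standard null cone is equivalent to the assertion that $\hat\Sigma$ is a sphere of symmetry. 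I would then propagate $\zeta = -d\log\tr\chi$ from $\Sigma$ to $\hat\Sigma$ along the generators, using the Raychaudhuri equation for $\tr\chi$ and the transport equation for $\zeta$, converting \eqref{CNNC} into a geometric identity for $\hat\Sigma$ as a hypersurface of $M$, expressed through the static potential $f$ and $\hat H$.

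Finally I would apply rigidity on $M$. The differential inequality \eqref{differential inequality} is exactly the structural hypothesis under which $M$ supports the sharp Heintze--Karcher inequality of Brendle \cite{B}, with equality precisely for the slices $\{r=r_0\}$. Using $\hat H>0$ together with the transported form of \eqref{CNNC} to saturate this inequality, I would conclude that $\hat\Sigma$ is umbilic, hence a sphere of symmetry; consequently $\mathcal N$ is an outgoing standard null cone and $\Sigma\subset\mathcal N$ lies in it, as claimed.

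The main obstacle is the middle step: propagating $\zeta$ and $\tr\chi$ along the null generators and extracting from $\zeta = -d\log\tr\chi$ a sign-definite, correctly weighted relation on $\hat\Sigma$ whose weight matches the potential $f$ appearing in Brendle's inequality, so that the equality case can be forced. One must also control the conformal factor relating the induced metrics on $\Sigma$ and $\hat\Sigma$, ensure the generators reach the slice without caustics so that $\hat\Sigma$ is genuinely embedded, and justify integrating the exact one-form $\zeta$ (for instance via simple connectivity of the relevant cross sections).
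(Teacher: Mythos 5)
First, a point of context: the present paper does not prove Theorem \ref{main WWZ} at all. It is quoted as background from \cite{WWZ}, and the introduction only contrasts its method (a spacetime Minkowski formula plus a monotonicity formula, built on Brendle's theorem \cite{B}) with the elementary integration-by-parts arguments used for Theorems \ref{inf CNNC rigidity} and \ref{constant mean curvature vector norm}. So your proposal can only be measured against the proof in \cite{WWZ}. At the level of its skeleton, your outline does follow that proof: the boost-invariant reformulation of \eqref{CNNC} (equivalently, the existence of null normals with $\langle H,L\rangle$ constant and vanishing torsion, which is exactly Proposition 3.4 of \cite{WWZ}, quoted in Section 3 here), the reduction along the past null hypersurface to an embedded hypersurface $\hat\Sigma$ of positive mean curvature in a time slice, and Brendle's sharp Heintze--Karcher inequality together with its equality case.

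However, the proposal has genuine gaps, and they sit precisely where the theorem lives. (a) The step you defer as ``the main obstacle''---propagating the data along the null generators and extracting a sign-definite, $f$-weighted relation on $\hat\Sigma$---is not a technicality to be smoothed out later; it is the monotonicity formula of \cite{WWZ} (an $f$-weighted integral of the inverse null expansion does not increase under the null projection), and it is the core of the argument. A proposal that names this step but does not carry it out has not proved anything. (b) Your mechanism for forcing equality is wrong in shape: equality in Brendle's inequality is not obtained from a pointwise ``transported form of \eqref{CNNC}'' on $\hat\Sigma$. In \cite{WWZ} it is obtained by pairing the Heintze--Karcher inequality with the \emph{spacetime Minkowski formula}, an integral identity on $\Sigma$ itself in which \eqref{CNNC} (in the torsion-free gauge) kills the divergence term; this identity never appears in your outline, and without it the inequality cannot be saturated. (c) Two smaller inaccuracies: in the mean-curvature gauge the two null expansions have opposite signs when $H$ is spacelike ($\tr\chi=|H|$ forces $\tr\chib=-|H|$ up to normalization), so $\tr\chi=\tr\chib=|H|$ is false; and your claim that \eqref{CNNC} becomes a condition ``intrinsic to $\mathcal{N}$'' independent of the cross section is unjustified---the transport equation for the torsion along the generators involves shear and curvature terms, so exactness at one cross section does not obviously persist---and this assertion is neither needed nor used in \cite{WWZ}.
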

We remark that for Minkowski spacetime, the rigidity is proved unconditionally in \cite{HMR} using spinor method.

In this paper, we prove two rigidity results for spacelike codimension 2 submanifolds lying in a standard null cone. \begin{theorem}\label{inf CNNC rigidity}
Let $\Sigma$ be a spacelike hypersurface in the standard null cone of a static spherically symmetric spacetime satisfying Assumption \ref{spacetime_assumption}. Then $\Sigma$ is infinitesimally rigid (see Definition \ref{def: inf CNNC rigid}) for \eqref{CNNC}. 
\end{theorem}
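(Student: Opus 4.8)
The plan is to translate the CNNC condition into a statement about the null geometry of the cone, linearize it in the direction transverse to the cone, and show that the resulting operator has the smallest kernel allowed by the symmetry.

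First I would fix the null generator $L$ of the standard null cone $\mathcal N$ and, for a cross section $\Sigma$, the conjugate null normal $\Lb$ normalized by $\langle L,\Lb\rangle=-2$ and $\Lb\perp T\Sigma$. Writing $\theta_L=\tr\chi$, $\theta_{\Lb}=\tr\chib$ for the null expansions and $\zeta(X)=\tfrac12\langle D_X L,\Lb\rangle$ for the torsion, a direct computation relates the mean curvature gauge to this null frame: the mean curvature vector is $H=-\tfrac12(\theta_{\Lb}L+\theta_L\Lb)$, which is spacelike with $|H|=\sqrt{-\theta_L\theta_{\Lb}}$, and the boost carrying $\{L,\Lb\}$ to $\{e_{n+1}^H,e_n^H\}$ has rapidity $\tfrac12\log(-\theta_L/\theta_{\Lb})$, so that $\alpha_H+d\log|H|=\zeta+d\log\theta_L$. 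Hence CNNC is equivalent to $\zeta=-d\log\theta_L$. A short calculation shows this holds identically on every cross section of $\mathcal N$: for the Minkowski cone $\{r=t\}$ the section $\{r=\rho(\theta)\}$ has $\theta_L=(n-1)/\rho$ and $\zeta=d\log\rho$, so $\zeta+d\log\theta_L=0$, and the warped case is analogous with $f$-dependent factors that cancel. In particular $\Sigma$ itself solves CNNC and its induced metric is pointwise conformal to $g_{S^{n-1}}$.

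Because every cross section of $\mathcal N$ solves CNNC, the variations that keep $\Sigma$ inside $\mathcal N$ — the $L$-component $aL$ of a normal variation, which merely reparametrizes the graph $\rho$ — lie automatically in the kernel of the linearization and are the trivial, tangent-to-the-family directions. The content of infinitesimal rigidity is therefore confined to the transverse variation $b\Lb$. I would compute $\delta_{b\Lb}(\zeta+d\log\theta_L)$ from the null structure equations: the first variation of the outgoing expansion is the MOTS-type operator $\delta_{b\Lb}\theta_L=-\Delta_\Sigma b+2\zeta\cdot\na b+Q\,b$, where $Q$ collects $\operatorname{div}\zeta-|\zeta|^2$, the Gauss curvature of $\gamma$, and the relevant spacetime curvature components, while the torsion transport equation gives $\delta_{b\Lb}\zeta$. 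Assembling these yields a $1$-form valued second order operator $\mathcal L b=\delta_{b\Lb}\zeta+d(\theta_L^{-1}\delta_{b\Lb}\theta_L)$, and $\Sigma$ is infinitesimally rigid exactly when $\ker\mathcal L$ reduces to the deformations produced by passing to a nearby standard null cone.

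The final step is to identify $\ker\mathcal L$. Using the conformal identification $\gamma=\rho^2 g_{S^{n-1}}$ to put $\mathcal L$ into conformally covariant form, I expect $\mathcal L b=0$ to collapse to a scalar equation whose kernel, in the space-form (equality) case, is spanned by the $\ell\le 1$ spherical harmonics — precisely the restrictions of the ambient translation and boost fields that move the vertex of the light cone — and, in the strict-inequality case, by the $\ell=0$ mode alone, corresponding to sliding among the concentric standard null cones. The differential inequality \eqref{differential inequality} is what supplies the sign of the zeroth order coefficient $Q$ that excludes the higher modes, exactly as in Brendle's and the \cite{WWZ} rigidity arguments. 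The main obstacle I anticipate is the computation and control of $\delta_{b\Lb}\zeta$: the torsion is gauge dependent and its transport equation couples to $\na\theta_{\Lb}$ and the shear, so the delicate point is to verify that, after combining with $d(\theta_L^{-1}\delta_{b\Lb}\theta_L)$, the genuinely non-gradient part of $\mathcal L b$ vanishes on the kernel and the remaining scalar operator admits no kernel beyond the conformal Killing modes permitted by \eqref{differential inequality}.
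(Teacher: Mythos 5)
Your setup is sound and in fact coincides with the paper's: since motions within the cone trivially preserve \eqref{CNNC}, only the transverse variation $U\Lb$ matters, and the gauge-invariant $1$-form $\zeta + d\log\tr\chi$ (your $\mathcal{L}$ is exactly its linearization; the paper instead writes two linearized conditions and eliminates the frame-rescaling term $\psi'$ between them) is the correct object to differentiate. Your preliminary identities --- cross sections of a standard cone satisfy \eqref{CNNC}, and $\alpha_H + d\log|H| = \zeta + d\log\tr\chi$ --- are correct up to sign and normalization conventions. Also, the obstacle you single out, computing $\delta_{b\Lb}\zeta$, is real but routine: it is supplied by the explicit warped-product curvature components (the paper's Proposition \ref{curvature}), namely $\sigma^{ab}\bar R(\Lb,\pl_a,\pl_b,L) = -2(n-1)\frac{ff'}{r}$ and $\bar R(\Lb,\pl_a,L,\Lb) = \frac{4}{r}\pl_a r \lt( -(ff')' + \frac{ff'}{r} \rt)$.

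The genuine gap is that the proposal stops exactly where the theorem begins: the linearized operator is never computed, the ``collapse to a scalar equation'' is only expected, and the kernel identification is asserted, not proved --- you say as much when you call the structural cancellation ``the delicate point.'' Moreover, the mechanism you anticipate cannot work as described. First, after writing $U = ur$ and passing to $\tilde\sigma = r^{-2}\sigma$, the resulting equation has coefficients depending on the arbitrary positive function $r$ on $S^{n-1}$, so there is no spherical-harmonic decomposition available and its kernel cannot be read off as ``spanned by the $\ell\le 1$ harmonics.'' Second, no strict-sign argument on a zeroth-order coefficient $Q$ can be the engine of the proof: in the space-form case the kernel must be $(n+1)$-dimensional, so whatever inequality is used has to degenerate exactly when equality holds in \eqref{differential inequality}. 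What actually closes the argument in the paper is an integral identity: multiply the linearized $1$-form equation by $r^{n-1}\tilde\na^a u$, integrate over $S^{n-1}$, and apply the Ricci identity of the round sphere to obtain
\[
0 = \int_{S^{n-1}} \lt[ (n-1) r^{n-2}\lt( f^2 - 1 - rff' \rt)|\tilde\na u|^2 \;-\; \frac{n-1}{n-2}\, r^{n-2}\lt|\tilde\na^2 u - \frac{1}{n-1}(\tilde\Delta u)\,\tilde\sigma\rt|^2 \rt],
\]
in which \emph{both} terms are nonpositive precisely by \eqref{differential inequality}. Strict inequality forces $u$ to be constant; equality (the Lorentz space forms) forces the traceless Hessian of $u$ to vanish, and only then does Theorem \ref{well-known} (Tashiro/Obata) yield that $u$ is supported in $\ell\le 1$ modes. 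Until you carry out the linearization and produce this identity (or an equivalent sign-definite one), the proposal remains a plausible roadmap rather than a proof.
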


\begin{theorem}\label{constant mean curvature vector norm}
Let $\Sigma$ be a spacelike codimension 2 submanifold lying in the standard null cone of a static spherically symmetric spacetime $(V,\bar{g})$ satisfying Assumption \ref{spacetime_assumption}. Suppose the mean curvature vector of $\Sigma$ has constant norm. Then $\Sigma$ is a sphere of symmetry.\end{theorem}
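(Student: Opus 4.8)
The plan is to reduce the constancy of $|H|$ to a second order equation for the radial graph function of $\Sigma$ over $S^{n-1}$, and then to recognize this equation, through the Obata--Liouville circle of ideas, as forcing $\Sigma$ to be a conformal image of a round sphere. First I would coordinatize the outgoing standard null cone by its generators: with $\bar g = -f^2 dt^2 + f^{-2}dr^2 + r^2 g_{S^{n-1}}$ the cone is $\{t = t_0 + \int^r f^{-2}\,ds\}$, and a spacelike cross section is a graph $r = r(\theta)$, $\theta \in S^{n-1}$. Pulling back $\bar g$, the $dt^2$ and $dr^2$ contributions cancel, so the induced metric is conformally round,
\[ \sigma = r^2\, g_{S^{n-1}}. \]
The generator $L = f^{-2}\partial_t + \partial_r$ is null and tangent to the cone, hence normal to $\Sigma$; let $\underline L$ be the transverse null normal with $\langle L,\underline L\rangle = -2$. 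Writing $H = \tfrac12\underline\theta\,L + \tfrac12\theta\,\underline L$ with $\theta = \tr_\sigma\chi$, $\underline\theta = \tr_\sigma\underline\chi$, one has $|H|^2 = -\theta\underline\theta$. A computation with the Christoffel symbols of $\bar g$ gives the clean values
\[ \theta = \frac{n-1}{r}, \qquad \chi_{AB} = r\,(g_{S^{n-1}})_{AB}, \qquad |H|^2 = 2(n-1)w\,\Delta w - (n-1)^2|\nabla w|^2 + (n-1)^2 f^2 w^2, \]
where $w = 1/r$ and $\nabla,\Delta$ are taken with respect to $g_{S^{n-1}}$. In particular the outgoing null expansion is pure trace ($\mathring\chi = 0$), so only $\underline\chi$ carries shear.

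The decisive observation is that, after the substitution $w = 1/r$, the trace-free part of $\underline\chi$ is proportional to the trace-free Hessian of $w$:
\[ \mathring{\underline\chi}_{AB} = -\frac{2}{w^2}\Bigl(\nabla_A\nabla_B w - \frac{\Delta w}{n-1}(g_{S^{n-1}})_{AB}\Bigr). \]
Since $\theta = (n-1)/r$ is already determined by $r$, the hypothesis $|H|^2\equiv c$ is equivalent to $\underline\theta = -\tfrac{c}{n-1}\,r$, i.e.\ to the scalar equation for $w$ displayed above. Thus the theorem reduces to showing that this single scalar equation forces $\mathring{\underline\chi}=0$, equivalently $\nabla_A\nabla_B w = \tfrac{\Delta w}{n-1}(g_{S^{n-1}})_{AB}$.

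To extract the trace-free Hessian with a usable sign I would run a Bochner/$P$-function argument on $(S^{n-1},g_{S^{n-1}})$, whose Ricci curvature is $(n-2)g_{S^{n-1}}$: substituting the equation for $\Delta w$ and its gradient into the Bochner formula for $|\nabla w|^2$ and integrating, the non-trace-free terms should assemble into $\int|\mathring{\nabla}^2 w|^2$ plus a curvature remainder. The point is that this remainder is built from $f^2/r^2$ and $ff'/r$, that is, from exactly the quantity appearing in Assumption~\ref{spacetime_assumption}, so that one is led to an identity of the shape
\[ \int_{S^{n-1}} |\mathring{\nabla}^2 w|^2\, d\mu + \int_{S^{n-1}} Q[w]\Bigl(\frac{ff'}{r} - \frac{f^2-1}{r^2}\Bigr) d\mu = 0, \qquad Q[w]\ge 0, \]
where the differential inequality \eqref{differential inequality} makes the second integrand nonnegative. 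Both terms then vanish. This is the step I expect to be the main obstacle: matching the constants so that the Bochner remainder is precisely the curvature combination in \eqref{differential inequality}, verifying $Q[w]\ge 0$, and correctly handling the borderline case where \eqref{differential inequality} is an equality, which is exactly the Lorentz space forms and in which nonconstant $w$ are permitted.

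Finally, $\mathring{\nabla}^2 w\equiv 0$ on the round sphere means $w$ is a constant plus a first eigenfunction, i.e.\ the restriction to $S^{n-1}\subset\R^n$ of an affine function $a_0 + \sum_i a_i x^i$, so that $r = (a_0 + \sum_i a_i x^i)^{-1}$. By the Liouville theorem such conformal factors are precisely those arising from the conformal group of $S^{n-1}$, which is the restriction of the Lorentz group; equivalently they describe exactly the images of a round sphere $\{t=t_0,r=r_0\}$ under a Lorentz transformation. Hence $\Sigma$ is a sphere of symmetry. When \eqref{differential inequality} is strict the second integral above is strictly positive unless the eigenfunction part of $w$ vanishes, forcing $w$, and therefore $r$, to be constant, so that $\Sigma=\{t=t_0,r=r_0\}$ is a round sphere of symmetry, completing the proof.
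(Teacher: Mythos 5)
Your strategy coincides with the paper's own proof: reduce $|H|^2=\mathrm{const.}$ to a scalar second-order equation for $u=1/r$ relative to the round metric $\tilde\sigma$ on $S^{n-1}$, force the trace-free Hessian of $u$ to vanish through a weighted integral identity whose remainder carries the sign of \eqref{differential inequality}, and settle the borderline space-form case via the Obata-type theorem together with the identification of $\ell\le 1$-mode functions $1/r$ with Lorentz images of spheres of symmetry. Your preparatory formulas are correct (up to your different normalization $L=f^{-2}\partial_t+\partial_r$ versus the paper's $\frac{2r}{f^2}\partial_v$), including the conformal identity expressing the trace-free part of $\underline\chi$ through the trace-free Hessian of $w=1/r$, and your reading of the hypothesis as a single scalar equation for $w$.

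The one step you leave undone---and explicitly flag as the main obstacle---is exactly where the paper's work lies, so the proposal as written is incomplete there; but your conjectured identity is correct, and the missing ingredient is the choice of multiplier. Apply $\tilde\Delta$ to $E=(n-1)f^2u^2-(n-1)|\tilde\na u|^2+2u\tilde\Delta u$, use the Bochner/Ricci identity with $\mathrm{Ric}(\tilde\sigma)=(n-2)\tilde\sigma$, multiply by the weight $u^{2-n}$, and integrate by parts over $S^{n-1}$: all terms without a sign cancel and one arrives at
\begin{align*}
0=\int_{S^{n-1}}\left[\frac{2(n-1)}{u^{n-2}}\left|\tilde\na^2u-\frac{\tilde\Delta u}{n-1}\tilde\sigma\right|^2
+\frac{2(n-1)(n-2)\,r}{u^{n-1}}\left(\frac{ff'}{r}-\frac{f^2-1}{r^2}\right)|\tilde\na u|^2\right],
\end{align*}
which is precisely your shape with $Q[w]=\frac{2(n-1)(n-2)\,r}{u^{n-1}}|\tilde\na u|^2\ge 0$. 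Both integrands are nonnegative by \eqref{differential inequality}, so both vanish, and the case analysis you describe (strict inequality forces $u$ constant; equality forces the space form, where the vanishing trace-free Hessian gives $u$ in the $\ell\le 1$ modes) then runs as you outlined. One smaller point: your closing appeal to ``Liouville'' to pass from $1/r=a_0+\sum_i a_ix^i$ to a Lorentz image of $\{t=t_0,r=r_0\}$ is not automatic; it is the content of Proposition \ref{Lorentz} and Theorem \ref{isometries}, where one solves for $r_0$ and the boost parameter $\beta$ from $a_0,a_i$, using $a_0>|\vec a|$ (which holds because $1/r>0$ on all of $S^{n-1}$).
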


\begin{figure}
\centering \includegraphics[scale=0.7]{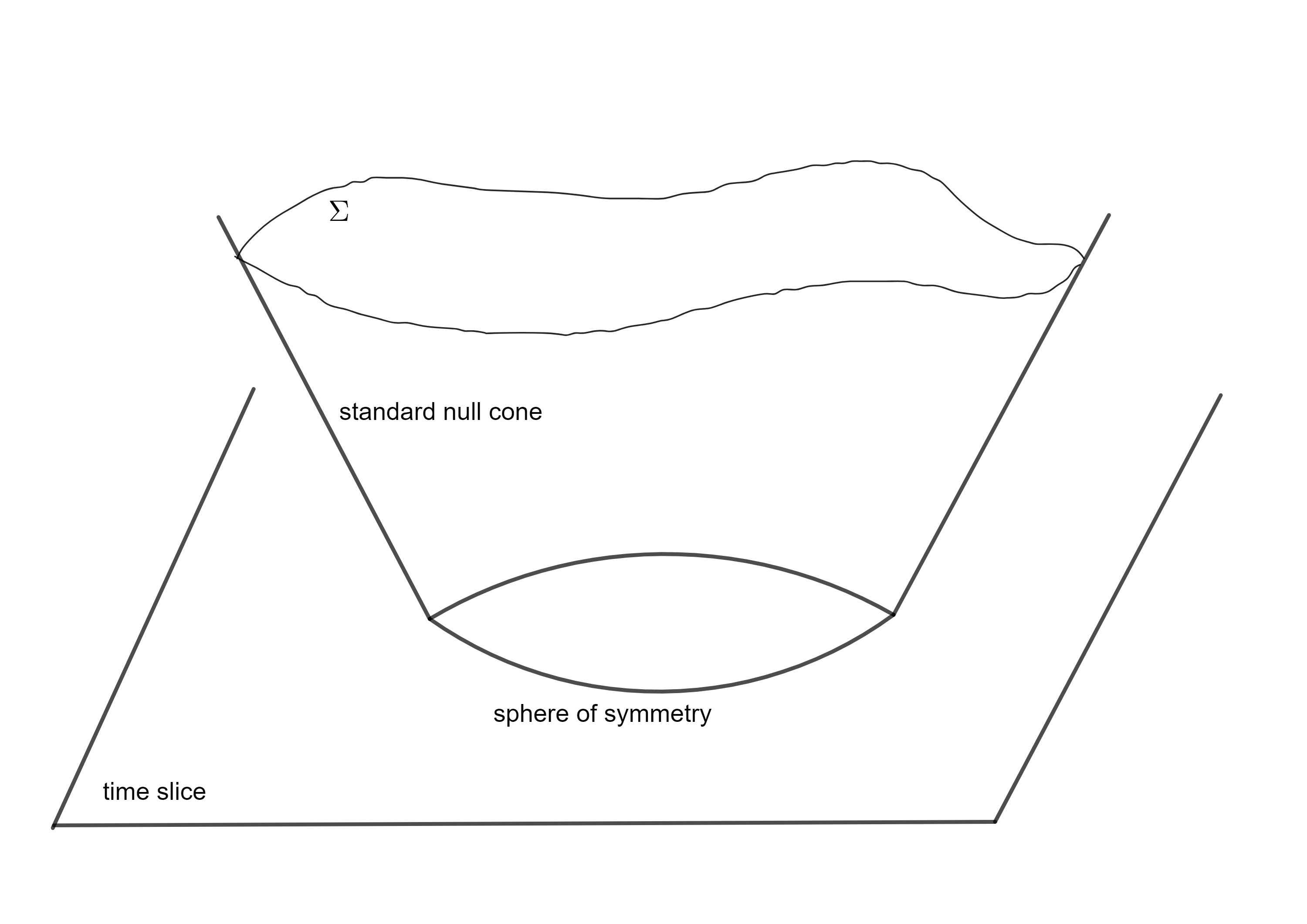}
\end{figure}

In the preliminary Section 2, we discuss the geometry of codimension 2 submanifolds in static spherically symmetric spacetimes. Special care, given in Section 2.3, is needed for the Lorentz space forms because of the additional boost isometries (Lorentz transformations). In Section 3, we prove Theorem \ref{inf CNNC rigidity}. It is an infinitesimal version of Theorem \ref{main WWZ}. While one expects it to hold, the proof here uses only elementary integration by parts instead of the spacetime Minkowski formula and monotonicity formula established in \cite{WWZ}. In Section 4, we begin with a discussion of the constant Gauss curvature equation which turns out to be equivalent to finding surfaces in the standard null cone of Minkowski spacetime with constant mean curvature vector norm. The proof of Theorem \ref{constant mean curvature vector norm} similar to that of Theorem \ref{inf CNNC rigidity} and leads to the Obata Theorem on the uniqueness of constant scalar curvature metric in conformal geometry.

We remark that Theorem \ref{inf CNNC rigidity} and Theorem \ref{constant mean curvature vector norm} hold in static warped product spacetimes. See the end of Section 3 and 4 respectively.
\section{Geometry of codimension 2 submanifolds in static spherically symmetric spacetimes}
\subsection{Static spherically symmetric spacetimes}
 
Note that \eqref{differential inequality} is just the assumption (H4) in \cite{B}. Indeed, for a warped product metric $g=\frac{1}{f^2}dr^2 + r^2 g_{S^2}$, one has $Ric(\nu,\nu) = - (n-1)\frac{ff'}{r}$ and $Ric(e_1,e_1) = (n-2)\frac{1-f^2}{r^2} - \frac{ff'}{r} $ where $\nu$ and $e_1$ are unit normal and unit tangent vector of the sphere of symmetry with area radius $r$. \eqref{differential inequality} thus means that the Ricci curvature is smallest in the radial direction.

The differential inequality \eqref{differential inequality} is related to the null convergence condition in general relativity \cite[page 95]{HE}, which requires that Ricci curvature of the spacetime satisfy $\overline{Ric}(W,W) \ge 0$ for any null vector $W$. 

\begin{lemma} In the following two cases 
\begin{enumerate}
\item $I = [0,r_1) \quad f(0)=1, f'(0)=0$ 
\item $I=(r_0,r_1),\quad \lim_{r \rw r_0^+}f(r)=0$
\end{enumerate}
the null convergence condition is equivalent to the differential inequality \eqref{differential inequality}.
\end{lemma}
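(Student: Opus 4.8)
The plan is to reduce the null convergence condition to a single scalar inequality in $r$, to recognize that inequality as a total derivative of the quantity appearing in \eqref{differential inequality}, and then to integrate from the inner boundary, where the two listed cases are exactly what is needed to control the boundary term. First I would compute $\overline{Ric}$ in the orthonormal frame $e_0=f^{-1}\pl_t$, $e_1=f\pl_r$, and $e_2,\dots,e_n$ tangent to the $S^{n-1}$ factor. By staticity and spherical symmetry $\overline{Ric}$ is diagonal in this frame; using the static-slice decomposition $\overline{Ric}(e_0,e_0)=f^{-1}\Delta_g f$, $\overline{Ric}(e_i,e_j)=Ric^g_{ij}-f^{-1}\na^2_{ij}f$ together with the slice identities recalled above, one obtains
\begin{align*}
\overline{Ric}(e_0,e_0)&=\tfrac12(f^2)''+(n-1)\tfrac{ff'}{r}, & \overline{Ric}(e_1,e_1)&=-\tfrac12(f^2)''-(n-1)\tfrac{ff'}{r},\\
\overline{Ric}(e_A,e_A)&=(n-2)\tfrac{1-f^2}{r^2}-2\tfrac{ff'}{r}, & &A=2,\dots,n.
\end{align*}
The decisive structural point is $\overline{Ric}(e_1,e_1)=-\overline{Ric}(e_0,e_0)$, which is forced by $\bar g_{tt}\bar g_{rr}=-1$. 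Writing an arbitrary null vector as $W=e_0+\cos\theta\,e_1+\sin\theta\,\tau$ with $\tau$ a unit sphere direction (all such give the same Ricci value by isotropy), the diagonal form together with this relation collapses the quadratic form to $\overline{Ric}(W,W)=\sin^2\theta\,(\overline{Ric}(e_0,e_0)+\overline{Ric}(\tau,\tau))$. Hence the null convergence condition holds pointwise on $I$ if and only if $\Phi:=\tfrac12(f^2)''+(n-3)\tfrac{ff'}{r}+(n-2)\tfrac{1-f^2}{r^2}\ge 0$ on $I$.

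The heart of the argument is then the identity $\Phi=r^{1-n}\tfrac{d}{dr}\big(r^n[\tfrac{ff'}{r}-\tfrac{f^2-1}{r^2}]\big)$, which one checks by direct differentiation. Setting $G(r):=r^n\big(\tfrac{ff'}{r}-\tfrac{f^2-1}{r^2}\big)$, so that $G'=r^{n-1}\Phi$, the null convergence condition becomes the monotonicity statement $G'\ge 0$, whereas the differential inequality \eqref{differential inequality} is exactly the positivity statement $G\ge 0$. These are reconciled by the boundary value of $G$ at the inner end of $I$: integrating $G'=r^{n-1}\Phi$ shows that once $\lim G\ge 0$ there, the condition $G'\ge 0$ propagates $G\ge 0$ across all of $I$, yielding \eqref{differential inequality}, the borderline case $\Phi\equiv 0$ (so $G$ constant) being precisely the Lorentz space forms.

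It therefore remains to evaluate that boundary value in each case, which I expect to be the main obstacle. In case (1), the regular-center conditions $f(0)=1$, $f'(0)=0$ together with $n\ge 3$ give $\lim_{r\to 0^+}G(r)=0$; the cancellation producing this limit, and the exact role of the hypothesis $n\ge 3$, are the points that must be checked carefully. In case (2), the horizon condition $f\to 0$ yields $\lim_{r\to r_0^+}G(r)=\tfrac12 r_0^{n-1}(f^2)'(r_0)+r_0^{n-2}$, which is nonnegative because $f^2$ increases away from its zero at $r_0$; the delicate feature here is that although $f'$ diverges like $(r-r_0)^{-1/2}$ at a horizon, the combination $r^{n-1}ff'=\tfrac12 r^{n-1}(f^2)'$ remains finite, so that the boundary term is well defined and of the correct sign. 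With this boundary value pinned down, the equivalence asserted in the two cases follows from the divergence identity.
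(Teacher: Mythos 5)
Your proposal is correct and takes essentially the same route as the paper's proof: the same key identity $\bigl(r^{n-1}ff'+r^{n-2}(1-f^2)\bigr)'=r^{n-1}\bigl[\tfrac12 (f^2)''+(n-3)\tfrac{ff'}{r}+(n-2)\tfrac{1-f^2}{r^2}\bigr]$, the same nonnegative boundary limits at $r=0$ (case 1) and $r\to r_0^+$ (case 2), and the same monotonicity/propagation argument. The only substantive difference is at the front end, where you compute the full diagonal Ricci tensor and show the null convergence condition is \emph{equivalent} to the single scalar inequality $\Phi\ge 0$ (via the relation $\overline{Ric}(e_0,e_0)+\overline{Ric}(e_1,e_1)=0$), whereas the paper merely tests the condition on the one null vector $\tfrac{1}{f}\partial_t+e_1$, which already yields the implication (null convergence condition $\Rightarrow$ \eqref{differential inequality}) that is, in fact, the only direction either argument establishes.
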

\begin{proof}
We consider the inequality $r^{n-1}ff' + r^{n-2}(1-f^2) \ge 0$, which is equivalent to \eqref{differential inequality}. Since we have $r^{n-1}ff' + r^{n-2}(1-f^2) \ge 0$ at $r=0$ in case 1 and as $r \rw r_0^+$, it suffices to check that the quantity has nonnegative derivative.  
 
Let $e_1$ be a unit tangent vector on the sphere of symmetry. Null convergence condition implies that
\[ 0 \leq \overline{Ric} \lt( \frac{1}{f} \frac{\pl}{\pl t} + e_1, \frac{1}{f} \frac{\pl}{\pl t} + e_1 \rt) = (n-3) \frac{ff'}{r}+\frac{1}{2} (f^2)'' + (n-2) \frac{1}{r^2}(1-f^2). \]
On the other hand,
\begin{align*}
&\big( r^{n-1} ff' + r^{n-2} (1-f^2) \Big)' \\
&= r^{n-1} \lt[ (n-3) \frac{ff'}{r}+\frac{1}{2} (f^2)'' + (n-2) \frac{1}{r^2}(1-f^2) \rt] \geq 0.
\end{align*}
This completes the proof. 
\end{proof}
\begin{rmk}
Case 1 includes the Lorentz space forms. Case 2 includes the Schwarzschild spacetime with positive mass $m$ where $f(r) = \sqrt{1 - \frac{2m}{r}}$ and $r_0 = 2m$. 
\end{rmk}

\subsection{Geometry of codimension 2 submanifolds}
Let $(V,\bar g)$ be an $(n+1)$-dimensional time-oriented spacetime with Levi-Civita connection $D$. Let $\Sigma$ be a spacelike codimension 2 submanifold in $V$. We will write $\langle X,Y \rangle = \bar g(X,Y)$ for the metric on $V$. Let $\sigma$ and $H$ be the induced metric and the mean curvature vector of $\Sigma$. The normal bundle of $\Sigma$ is of rank 2 and is spanned by two future-directed null normal vector fields $L$ and $\Lb$, normalized by $\langle L,\Lb \rangle=-2$. Let 
\begin{align*}
\chi_{ab} &= \langle D_a L, \pl_b \rangle, \quad \chib_{ab} = \langle D_a \Lb, \pl_b \rangle \\
\tr\chi &= \sigma^{ab} \chi_{ab}, \quad \tr\chib = \sigma^{ab}\chib_{ab}\\
\zeta_a &= \frac{1}{2}\langle D_a L,\Lb \rangle
\end{align*}
be the null second fundamental forms, null mean curvatures, and the torsion 1-form of $\Sigma$ with respect to $L$ and $\Lb$. If we take $L = e_n^H + e_{n+1}^H$ and $\Lb = -e_n^H + e_{n+1}^H$, then $\zeta = \alpha_H$.

From the rest of the paper, the spacetime $(V,\bar g)$ is static and spherically symmetric as given in Assumption \ref{spacetime_assumption}. The spheres $t=t_0, r= r_0$ are referred to as {\it spheres of symmetry}, and the null hypersurfaces emanating from the spheres of symmetry are referred to as {\it standard null cones}.

It is convenient to work in the Eddington--Finkelstein coordinates. Let $r^* = \int \frac{dr}{f^2}$ be the tortoise coordinate and $v= t+r^*, w = t-r^*$ be the advanced and retarded time. The metric in the Eddington--Finkelstein coordinates \cite[page 153]{HE} becomes $\bar{g} = -f^2 dv dw + r^2 g_{S^{n-1}}.$ ($r$ is implicitly a function of coordinates $v$ and $w$). 
The outgoing standard null cones are defined by $w=w_0$. For a spacelike codimension 2 submanifold $\Sigma$ lying in the standard null cone, we view it as a graph over the sphere of symmetry. Namely, $\Sigma = F(S^{n-1})$ where $F: S^{n-1} \rw V$ 
$$ F(x) = (v(x), w_0,x). $$ 
We abuse the notation to write $r= r \circ F: S^{n-1} \rw \R$ and $v = v\circ F: S^{n-1} \rw \R$.  

The tangent vector of $\Sigma $ is given by $\frac{\pl F}{\pl \theta^a} = \frac{\pl v}{\pl \theta^a} \frac{\pl}{\pl v} + \frac{\pl}{\pl \theta^a}.$ We always choose the null normal vector field $L$ that is tangent to $w=w_0$ to be
\begin{align}
L = \frac{2r}{f^2} \frac{\pl}{\pl v},
\end{align}
which satisfies
\begin{align*}
D_{\frac{\pl F}{\pl\theta^a}} L &= \vphi' \frac{\pl v}{\pl\theta^a} \frac{\pl}{\pl v} + \vphi \left( \frac{\pl v}{\pl\theta^a} D_{\frac{\pl}{\pl v}} \frac{\pl}{\pl v} + D_{\frac{\pl}{\pl\theta^a}} \frac{\pl}{\pl v} \right)\\
&= \vphi' \frac{\pl v}{\pl\theta^a} \frac{\pl}{\pl v} + \vphi \frac{\pl v}{\pl\theta^a} \frac{\pl \log f}{\pl v} \frac{\pl}{\pl v} + \vphi \frac{f^2}{2r} \frac{\pl}{\pl\theta^a}\\
&= \left( \vphi' + \vphi \frac{\pl \log f}{\pl v} \right) \frac{\pl v}{\pl\theta^a} \frac{\pl}{\pl v} + \frac{\pl}{\pl\theta^a}\\
&= \left( \dfrac{2 \frac{\pl r}{\pl v} f^2 - 2r \frac{\pl f^2}{\pl v}}{f^4} + \frac{2r}{f^2} \frac{\pl \log f}{\pl v} \right) \frac{\pl v}{\pl\theta^a} \frac{\pl}{\pl v} + \frac{\pl}{\pl \theta^a} \\
&= \frac{2}{f^2} \frac{\pl r}{\pl v} \frac{\pl v}{\pl\theta^a} \frac{\pl}{\pl v} + \frac{\pl}{\pl \theta^a} \\
&= \frac{\pl F}{\pl\theta^a}.
\end{align*}
In the above computation, the following Christoffel symbols of $\bar g$ are used
\begin{align*}
\bar\Gamma_{av}^b &= \frac{1}{r} \frac{\pl r}{\pl v} \delta_a^b \\
\bar\Gamma_{vv}^v &= \frac{\pl \log f}{\pl v}.
\end{align*}
As a result, $\langle \vec{H},L \rangle = -2$. Let $\Lb$ be the null normal complement to $L$ such that $\langle L, \Lb \rangle = -2.$ It is straightforward to show that
\begin{align}
\Lb= \frac{1}{r} \lt( 2 \frac{\pl}{\pl w} + f^2 \na v - \frac{f^2}{2}|\na v|^2 \frac{\pl}{\pl v} \rt)
\end{align}
where $\na v = \sigma^{ab}\frac{\pl v}{\pl\theta^a} \frac{\pl}{\pl \theta^b}$. We end this subsection with 
\begin{lemma}\label{surface on standard null cone has CNNC}
Spacelike codimension 2 submanifolds lying in a standard null cone satisfy \eqref{CNNC}.\end{lemma}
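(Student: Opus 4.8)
The plan is to exploit the distinguished null normal $L = \frac{2r}{f^2}\pl_v$ tangent to the cone, for which the computation preceding this lemma already establishes the crucial identity $D_{\pl_a F}\, L = \pl_a F$. The whole statement reduces to reading off two consequences of this identity and then tracking how the torsion $1$-form transforms under a null rescaling. Since the text already notes that in the frame $(L,\Lb)=(e_n^H+e_{n+1}^H,-e_n^H+e_{n+1}^H)$ one has $\zeta=\alpha_H$, the task is to compute $\zeta$ in our adapted frame and convert between the two gauges.

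First I would record that, because $D_a L = \pl_a F$ is \emph{tangent} to $\Sigma$ while $\Lb$ is normal, the torsion of the frame $(L,\Lb)$ vanishes identically:
\[
\zeta_a = \tfrac12\langle D_a L, \Lb\rangle = \tfrac12\langle \pl_a F, \Lb\rangle = 0.
\]
The same identity gives $\chi_{ab} = \langle D_a L, \pl_b\rangle = \langle \pl_a F, \pl_b F\rangle = \sigma_{ab}$, so $\tr\chi$ is a nonzero constant. Writing the mean curvature vector as $\vec H = \tfrac12(\tr\chib)\,L + \tfrac12(\tr\chi)\,\Lb$, I also obtain $|H|^2 = -\tr\chi\,\tr\chib$ and $\langle \vec H, L\rangle = -\tr\chi$, a negative constant.

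Next I relate $(L,\Lb)$ to the mean-curvature frame $(L^H,\Lb^H)=(e_n^H+e_{n+1}^H,-e_n^H+e_{n+1}^H)$. The normal plane contains exactly two null directions and both $L$ and $L^H$ are future-directed, so $L^H = \lambda L$ and $\Lb^H = \lambda^{-1}\Lb$ for a positive function $\lambda$. Pairing $\vec H = -|H|\,e_n^H$ against $L^H$ yields $\langle \vec H, L^H\rangle = -|H|$, whereas $\langle \vec H, L^H\rangle = \lambda\langle \vec H, L\rangle = -\lambda\,\tr\chi$; hence $\lambda = |H|/\tr\chi$.

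Finally, under the rescaling $L\mapsto \lambda L$, $\Lb\mapsto \lambda^{-1}\Lb$ the torsion obeys $\zeta \mapsto \zeta - d\log\lambda$, which is a one-line computation using $\langle L,\Lb\rangle=-2$. Combining this with $\zeta=0$ and $\tr\chi$ constant gives
\[
\alpha_H = \zeta - d\log\lambda = -\,d\log\frac{|H|}{\tr\chi} = -\,d\log|H|,
\]
which is exactly \eqref{CNNC}. I do not anticipate a substantive obstacle here: the lemma is essentially a gauge computation, and the only points requiring care are the orientation bookkeeping that identifies $L^H$ with a \emph{positive} multiple of $L$ (rather than of $\Lb$) and the correct sign in the torsion transformation law; everything else follows mechanically from $D_a L = \pl_a F$.
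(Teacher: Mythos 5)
Your proposal is correct in substance and is essentially the paper's own computation in different packaging: the paper writes $\vec H = \Lb + \psi L$, $\vec J = \Lb - \psi L$ and evaluates $\alpha_H$ directly from its definition, and the terms that survive there are exactly your two facts --- the torsion of the $(L,\Lb)$ gauge vanishes because $D_a L = \pl_a F$ is tangent to $\Sigma$, and the only contribution to $\alpha_H$ comes from differentiating the scaling function relating the two null frames. Your reorganization into ``$\zeta = 0$ plus the transformation law $\zeta \mapsto \zeta - d\log\lambda$'' is a clean equivalent of that.

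However, one inference is invalid as written, and it is load-bearing. You assert $L^H = \lambda L$ with $\lambda > 0$ ``because both $L$ and $L^H$ are future-directed.'' That does not follow: $\Lb$ is also a future-directed null normal, so future-directedness only places $L^H$ on one of \emph{two} rays, the one through $L$ or the one through $\Lb$. The distinction matters, since if $L^H$ were a positive multiple of $\Lb$ the same gauge computation would output $-d\log\bigl(|H|/\tr\chib\bigr)$, and $\tr\chib$ is not constant, so you would not recover \eqref{CNNC}. The gap closes with facts already in your argument: \eqref{CNNC} presupposes $\vec H$ spacelike (this is how $\alpha_H$ is defined in the paper), so $|H|^2 = -\tr\chi\,\tr\chib > 0$, and since $\tr\chi = n-1 > 0$ this forces $\tr\chib < 0$ everywhere; hence $\langle \vec H, \Lb\rangle = -\tr\chib > 0$, whereas $\langle \vec H, L^H\rangle = -|H| < 0$, so $L^H$ cannot be a positive multiple of $\Lb$ and must equal $\lambda L$ with $\lambda = |H|/\tr\chi > 0$. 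With that one line added, your proof is complete and agrees with the paper's.
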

\begin{proof}
 We have  
\begin{align*}
\vec{H} = \Lb + \psi L \\
\vec{J} = \Lb - \psi L
\end{align*}
with $|\vec{H}|^2 = -4 \psi.$
The connection 1-form thus satisfies
\begin{align*}
(\alpha_H)_a &= \langle D_a \left( -\frac{\vec{H}}{|\vec{H}|} \right), \frac{\vec{J}}{|\vec{H}|} \rangle \\
&= -\frac{1}{|\vec{H}|^2} \langle D_a (\Lb + \psi L), \Lb - \psi L \rangle \\
&= -\pl_a \log |\vec{H}|.
\end{align*}
\end{proof}

\subsection{Lorentz space forms}
Lorentz space forms (Minkowski, de Sitter, anti-de Sitter spacetimes) admit additional isometries compared to generic spherically symmetric static spacetimes. In the proof of our main results, it is essential to include these additional isometries. We review the Lorentz space forms in this subsection.
\subsubsection{Killing vector fields along the surfaces}\label{Killing}
For any spherically symmetric static spacetime, we have
\begin{prop}
Along any spacelike codimension 2 submanifold lying in a standard null cone $w=w_0$ of static spherically symmetric spacetime, we have
\begin{align*}
\langle \frac{\pl}{\pl t}, L \rangle = -r.
\end{align*}
\end{prop}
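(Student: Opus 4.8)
The plan is to prove this by a direct computation in the Eddington--Finkelstein coordinates $(v,w,\theta)$, since in these coordinates both the metric $\bar g = -f^2\,dv\,dw + r^2 g_{S^{n-1}}$ and the chosen null normal $L = \frac{2r}{f^2}\frac{\pl}{\pl v}$ already have explicit forms. The only preparatory work needed is to rewrite the static Killing field $\frac{\pl}{\pl t}$, which is given in the $(t,r,\theta)$ coordinates, in terms of $\frac{\pl}{\pl v}$ and $\frac{\pl}{\pl w}$, after which the pairing reduces to reading off one metric coefficient.

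First I would perform the change of frame. Because $v = t+r^*$ and $w = t-r^*$ with $r^* = r^*(r)$ a function of $r$ alone, differentiating at fixed $r$ yields $\frac{\pl v}{\pl t} = \frac{\pl w}{\pl t} = 1$, and hence $\frac{\pl}{\pl t} = \frac{\pl}{\pl v} + \frac{\pl}{\pl w}$. Next I would record the relevant metric pairings directly from $\bar g = -f^2\,dv\,dw + r^2 g_{S^{n-1}}$: since $\frac{\pl}{\pl v}$ and $\frac{\pl}{\pl w}$ are null, $\langle \frac{\pl}{\pl v}, \frac{\pl}{\pl v}\rangle = \langle \frac{\pl}{\pl w}, \frac{\pl}{\pl w}\rangle = 0$, while the mixed term gives $\langle \frac{\pl}{\pl v}, \frac{\pl}{\pl w}\rangle = -\tfrac{f^2}{2}$, with the factor $\tfrac12$ coming from the symmetric-product convention $dv\,dw = \tfrac12(dv\otimes dw + dw\otimes dv)$.

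Combining these, the pairing collapses to a single surviving cross term:
\[
\langle \tfrac{\pl}{\pl t}, L\rangle = \frac{2r}{f^2}\left\langle \frac{\pl}{\pl v} + \frac{\pl}{\pl w},\ \frac{\pl}{\pl v}\right\rangle = \frac{2r}{f^2}\left(0 - \frac{f^2}{2}\right) = -r,
\]
which is the claimed identity. I would note as a consistency check that the same metric pairings give $\langle \frac{\pl}{\pl t}, \frac{\pl}{\pl t}\rangle = 2\langle \frac{\pl}{\pl v}, \frac{\pl}{\pl w}\rangle = -f^2$, matching the static norm of $\frac{\pl}{\pl t}$ in the original coordinates.

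Since the statement is required only for a general static spherically symmetric spacetime, no special treatment of the Lorentz space forms is needed here, and the computation proceeds identically in every case. There is no substantive obstacle: the argument is a short coordinate calculation, and the only place where care is genuinely required is the bookkeeping of the factor $\tfrac12$ in $g_{vw} = -f^2/2$, which is exactly what produces the clean value $-r$ rather than $-2r$.
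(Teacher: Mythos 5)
Your proof is correct and is essentially the same one-line coordinate computation as the paper's: the paper writes $\frac{\pl}{\pl v} = \frac{1}{2}\bigl(\frac{\pl}{\pl t} + f^2\frac{\pl}{\pl r}\bigr)$ and pairs in the static $(t,r)$ frame, while you write $\frac{\pl}{\pl t} = \frac{\pl}{\pl v} + \frac{\pl}{\pl w}$ and pair in the Eddington--Finkelstein frame, which are mirror images of the same change of variables. Your bookkeeping of $g_{vw} = -f^2/2$ and the consistency check $\langle \frac{\pl}{\pl t},\frac{\pl}{\pl t}\rangle = -f^2$ are both correct.
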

\begin{proof}
Recall we take $L = \frac{2r}{f^2} \frac{\pl}{\pl v}$. As $\frac{\pl}{\pl v} = \frac{1}{2}(\frac{\pl}{\pl t} + f^2 \frac{\pl}{\pl r})$, the assertion follows.
\end{proof}
Next, we describe the Killing vector fields along the surfaces in the Minkowski spacetime.
\begin{prop}
Let $\mathcal{K}$ denote the boost Killing vector fields $y^0\frac{\pl}{\pl y^i} + y^i\frac{\pl}{\pl y^0}, i=1,\cdots,n$ of the $(n+1)$-dimensional Minkowski spacetime. Then along a spacelike codimension 2 submanifold lying in a standard null cone $w=w_0$, we have
\begin{align*}
\langle \mathcal{K},L \rangle = rw_0\tilde X^i
\end{align*} 
\end{prop}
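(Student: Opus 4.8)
The plan is to pass to the standard Cartesian coordinates $(y^0,\dots,y^n)$ of Minkowski spacetime, in which both $\mathcal{K}$ and the metric take their simplest form, and to evaluate the pairing $\langle \mathcal{K},L\rangle$ by a direct contraction. First I would record the relation between the coordinates $(t,r,\theta)$ appearing in Assumption \ref{spacetime_assumption} and the Cartesian ones. Since Minkowski spacetime corresponds to $f\equiv 1$, we have $y^0 = t$ and $y^i = r\tilde X^i$, where $\tilde X=(\tilde X^1,\dots,\tilde X^n): S^{n-1}\rw \R^n$ is the standard isometric embedding of the unit sphere, so that $\sum_i (\tilde X^i)^2 = 1$. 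Differentiating these relations at fixed angular position gives $\frac{\pl}{\pl t} = \frac{\pl}{\pl y^0}$ and $\frac{\pl}{\pl r} = \sum_j \tilde X^j \frac{\pl}{\pl y^j}$, and $\tilde X^i$ is to be read throughout as the pullback of $y^i/r$ to $S^{n-1}$.

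Next I would rewrite the chosen null generator in Cartesian form. With $f=1$ we have $L = 2r\,\frac{\pl}{\pl v}$, and using $\frac{\pl}{\pl v} = \frac{1}{2}\lt(\frac{\pl}{\pl t} + f^2\frac{\pl}{\pl r}\rt) = \frac{1}{2}\lt(\frac{\pl}{\pl t} + \frac{\pl}{\pl r}\rt)$ from the preceding subsection, we obtain
\[
L = r\lt(\frac{\pl}{\pl y^0} + \sum_j \tilde X^j \frac{\pl}{\pl y^j}\rt).
\]
The computation is then a one-line contraction against the flat metric $\eta = \mathrm{diag}(-1,1,\dots,1)$. Writing $\mathcal{K} = y^i\frac{\pl}{\pl y^0} + y^0\frac{\pl}{\pl y^i}$, only the $\frac{\pl}{\pl y^0}$--$\frac{\pl}{\pl y^0}$ and the $\frac{\pl}{\pl y^i}$--$\frac{\pl}{\pl y^i}$ terms survive, yielding $\langle \mathcal{K},L\rangle = -ry^i + ry^0\tilde X^i$. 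Substituting $y^i = r\tilde X^i$ and $y^0 = t$ collapses this to $r(t-r)\tilde X^i = rw\,\tilde X^i$, which equals $rw_0\tilde X^i$ upon restricting to the cone $w=w_0$.

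There is no genuine obstacle here: the whole content is the correct change of coordinates together with careful handling of the Lorentzian signature. The only point deserving attention is that the factor $rw_0$ arises precisely from the partial cancellation between the timelike contribution $-ry^i$ and the spacelike contribution $ry^0\tilde X^i$, so that the sign conventions in the metric and in the definition of $\mathcal{K}$ must be tracked consistently; with those fixed, the identity holds pointwise along $\Sigma$ independently of the graph function $v$.
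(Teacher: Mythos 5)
Your proof is correct and follows essentially the same route as the paper's: a direct computation based on $y^0=t$, $y^i=r\tilde X^i$, $\frac{\pl}{\pl v}=\frac{1}{2}\lt(\frac{\pl}{\pl t}+\frac{\pl}{\pl r}\rt)$, and $t-r=w_0$ on $\Sigma$, the only cosmetic difference being that the paper pushes $\mathcal{K}$ into spherical coordinates and contracts with the spherical metric, while you push $L$ into Cartesian coordinates and contract with $\eta$. One useful by-product of your version: it confirms that the normalization must be $L=2r\frac{\pl}{\pl v}$ (i.e.\ $L=\frac{2r}{f^2}\frac{\pl}{\pl v}$ with $f\equiv 1$), so the line ``$L=\frac{2}{r}\frac{\pl}{\pl v}$'' in the paper's proof is a typo, as that choice would instead give $\frac{w_0}{r}\tilde X^i$.
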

\begin{proof}
We write $\tilde g = g_{S^{n-1}}$ in the proof. In spherical coordinates,
\begin{align*}
\mathcal{K} = r\tilde X^i \frac{\pl}{\pl t} + t \lt( \tilde X^i \frac{\pl}{\pl r} + \frac{\tilde\na^a \tilde X^i}{r} \frac{\pl}{\pl\theta^a} \rt).
\end{align*}
Recall that $L = \frac{2}{r} \frac{\pl}{\pl v}$ with $\frac{\pl}{\pl v} = \frac{1}{2}\lt( \frac{\pl}{\pl t} + \frac{\pl}{\pl r} \rt)$. As $t = r + w_0$ on $\Sigma$, we get $
\langle \mathcal{K},L \rangle = r w_0 \tilde X^i$.
\end{proof}

The $(n+1)$-dimensional anti-de Sitter spacetime is defined as the quadric
\[ -(y^0)^2 + (y^1)^2 + (y^2)^2 +\cdots  + (y^{n})^2 - (y^{n+1})^2 = -l^2\]
embedded in $\R^{n,2}$ with the metric
\[ - (dy^0)^2 + (dy^1)^2 + (dy^2)^2 + \cdots  + (dy^{n})^2- (dy^{n+1})^2.\]
Anti-de Sitter spacetime is a space form with sectional curvature $-\frac{1}{l^2}$.

The {\it static patch} is given by the coordinate system
\begin{align*}
y^0 &= \sqrt{l^2 + r^2} \sin (t/l)\\
y^i &= r \tilde X^i,\quad i = 1,\cdots n\\
y^{n+1} &= \sqrt{l^2+r^2} \cos (t/l)
\end{align*}
in which the metric is given by
\[ - f^2 dt^2 + \frac{1}{f^2}dr^2 + r^2 g_{S^{n-1}} \]
where $f(r) = \sqrt{1+(r/l)^2}$.

The Killing vector fields $y^0 \frac{\pl}{\pl y^i} + y^i \frac{\pl}{\pl y^0}, y^{n+1} \frac{\pl}{\pl y^i} + y^i \frac{\pl}{\pl y^{n+1}}, i = 1,\cdots,n$ of $\R^{n,2}$
are tangent to the quadric and thus give rise to Killing vector fields of the anti-de Sitter spacetime. 
\begin{lem} Let $\mathcal{K}$ and $\mathcal{K}'$ denote the Killing vector field of anti-de Sitter spacetime given by $y^0\frac{\pl}{\pl y^i} + y^i\frac{\pl}{\pl y^0}$ and $y^{n+1}\frac{\pl}{\pl y^i} + y^i\frac{\pl}{\pl y^{n+1}}$ respectively.
In the static patch, 
\begin{align*}
\mathcal{K} &= \frac{r\tilde X^i}{f} \cos(t/l) \frac{\pl}{\pl t} + lf \tilde X^i \sin(t/l) \frac{\pl}{\pl r} + W\\
\mathcal{K}' &= -\frac{r\tilde X^i}{f} \sin(t/l) \frac{\pl}{\pl t} + lf \tilde X^i \cos(t/l) \frac{\pl}{\pl r} + W'
\end{align*}
where $W$ and $W'$ are tangent to the sphere of symmetry.
\end{lem}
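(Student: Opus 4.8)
The plan is to exploit the elementary fact that any vector field $X$ on the spacetime is recovered from its action on the coordinate functions via $X = X(t)\,\partial_t + X(r)\,\partial_r + X(\theta^a)\,\partial_{\theta^a}$. Since the angular components $X(\theta^a)\,\partial_{\theta^a}$ are by construction tangent to the sphere of symmetry, they are precisely the terms $W$ and $W'$ in the statement. It therefore suffices to compute the four numbers $\mathcal{K}(t), \mathcal{K}(r)$ and $\mathcal{K}'(t), \mathcal{K}'(r)$ and to check that the resulting $\partial_t, \partial_r$ coefficients match the advertised formulas; the leftover angular pieces need no further identification.

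First I would invert the static-patch embedding to express the coordinate functions through the ambient coordinates. From $y^i = r\tilde X^i$ with $\sum_i (\tilde X^i)^2 = 1$ one gets $r = \big(\sum_{j=1}^n (y^j)^2\big)^{1/2}$, so that $\partial_{y^i} r = \tilde X^i$ while $r$ is independent of $y^0$ and $y^{n+1}$. From $y^0 = \sqrt{l^2+r^2}\,\sin(t/l)$ and $y^{n+1} = \sqrt{l^2+r^2}\,\cos(t/l)$ one gets $t = l\arctan(y^0/y^{n+1})$, together with the single key identity $(y^0)^2 + (y^{n+1})^2 = l^2 + r^2 = l^2 f^2$, using $f = \sqrt{1+(r/l)^2}$, i.e. $\sqrt{l^2+r^2} = lf$. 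Differentiating the arctangent then yields $\partial_{y^0} t = \cos(t/l)/f$ and $\partial_{y^{n+1}} t = -\sin(t/l)/f$, while $\partial_{y^i} t = 0$ for $i = 1,\dots,n$.

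With these derivatives in hand, applying $\mathcal{K} = y^0 \partial_{y^i} + y^i \partial_{y^0}$ to $r$ and $t$ is immediate: $\mathcal{K}(r) = y^0 \tilde X^i = lf\sin(t/l)\,\tilde X^i$ and $\mathcal{K}(t) = y^i \cos(t/l)/f = r\tilde X^i \cos(t/l)/f$, exactly the stated coefficients of $\partial_r$ and $\partial_t$. The computation for $\mathcal{K}' = y^{n+1}\partial_{y^i} + y^i \partial_{y^{n+1}}$ is identical in structure, giving $\mathcal{K}'(r) = y^{n+1}\tilde X^i = lf\cos(t/l)\,\tilde X^i$ and $\mathcal{K}'(t) = -r\tilde X^i \sin(t/l)/f$. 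Re-substituting the static-patch expressions for $y^0$ and $y^{n+1}$ closes the argument.

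There is no serious obstacle here; the computation is essentially bookkeeping. The one place to be careful is the identity $(y^0)^2 + (y^{n+1})^2 = l^2 f^2$ and the observation that $r$ and $t$ depend on disjoint blocks of ambient coordinates ($r$ on $y^1,\dots,y^n$, and $t$ on $y^0, y^{n+1}$). This disjointness is exactly what kills the cross terms in $\mathcal{K}(t)$ and $\mathcal{K}(r)$ and produces the clean closed forms, and it is also why the angular remainders $W, W'$ drop out automatically as sphere-tangent fields.
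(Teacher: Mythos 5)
Your proposal is correct and follows essentially the same route as the paper's proof: extend $t = l\arctan(y^0/y^{n+1})$ and $r = \bigl(\sum_{j=1}^n (y^j)^2\bigr)^{1/2}$ off the quadric, apply the Killing fields to these coordinate functions to read off the $\partial_t$ and $\partial_r$ coefficients, and absorb the angular remainder into the sphere-tangent terms $W$, $W'$. The only cosmetic difference is that you tabulate the partial derivatives $\partial_{y^\alpha}t$, $\partial_{y^\alpha}r$ up front, while the paper applies $\mathcal{K}$ to $t$ and $r$ directly.
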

\begin{proof}
We extend the coordinate functions $t$ and $r$ off the quadric by
\begin{align*}
t &= l \tan^{-1} \lt( \frac{y^0}{y^{n+1}} \rt)\\
r &= (y^1)^2 + \cdots (y^n)^2.
\end{align*}
Suppose $y^0\frac{\pl}{\pl y^i} + y^i\frac{\pl}{\pl y^0} = A \frac{\pl}{\pl t} + B \frac{\pl}{\pl r} + W$ where $W$ is tangent to the sphere of symmetry. We compute
\begin{align*}
A = \lt( y^0\frac{\pl}{\pl y^i} + y^i\frac{\pl}{\pl y^0} \rt)t = y^i l \frac{y^{n+1}}{(y^0)^2 + (y^{n+1})^2} = \frac{r\tilde X^i}{f} \cos(t/l),
\end{align*}
and
\begin{align*}
B = \lt(y^0\frac{\pl}{\pl y^i} + y^i\frac{\pl}{\pl y^0} \rt) r = y^0 \frac{y^i}{r} = lf \tilde X^i \sin(t/l).
\end{align*}
The formula for $\mathcal{K}'$ is obtained similarly.
\end{proof}

\begin{rmk}
The Killing vector field $y^0 \frac{\pl}{\pl y^{n+1}} - y^{n+1}\frac{\pl}{\pl y^0}$ gives rise to the time translating Killing vector field $-l \frac{\pl}{\pl t}$ in the static patch. 
\end{rmk}

\begin{prop}
Let $\mathcal{K}$ and $\mathcal{K}'$ denote the Killing vector field of anti-de Sitter spacetime given by $y^0\frac{\pl}{\pl y^i} + y^i\frac{\pl}{\pl y^0}$ and $y^{n+1}\frac{\pl}{\pl y^i} + y^i\frac{\pl}{\pl y^{n+1}}$ respectively. Then along a spacelike codimension 2 submanifold lying in the standard null cone $w = w_0$, we have
\begin{align*}
\langle \mathcal{K},L \rangle &= r \cdot l\sin(w_0/l)\tilde X^i \\
\langle \mathcal{K}',L \rangle &= r \cdot l\cos(w_0/l)\tilde X^i.
\end{align*}
\end{prop}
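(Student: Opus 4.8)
The plan is to reduce both claimed identities to a direct computation of the inner products $\langle \mathcal{K},L\rangle$ and $\langle \mathcal{K}',L\rangle$, using the explicit form of the Killing fields furnished by the previous lemma together with the relation $t = w_0 + r^*$ that characterizes the null cone. First I would record the null normal in the static patch. Since $\frac{\pl}{\pl v} = \frac{1}{2}\big(\frac{\pl}{\pl t} + f^2\frac{\pl}{\pl r}\big)$, we have
\begin{align*}
L = \frac{2r}{f^2}\frac{\pl}{\pl v} = \frac{r}{f^2}\frac{\pl}{\pl t} + r\frac{\pl}{\pl r}.
\end{align*}
In particular $L$ carries no component tangent to the sphere of symmetry, so the terms $W$ and $W'$ contribute nothing to the pairings. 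Using the metric coefficients $\langle \frac{\pl}{\pl t},\frac{\pl}{\pl t}\rangle = -f^2$ and $\langle \frac{\pl}{\pl r},\frac{\pl}{\pl r}\rangle = f^{-2}$, the lemma then gives
\begin{align*}
\langle \mathcal{K},L\rangle = -\frac{r^2\tilde X^i}{f}\cos(t/l) + \frac{lr\tilde X^i}{f}\sin(t/l),
\end{align*}
together with the companion expression for $\langle \mathcal{K}',L\rangle$ in which $\cos(t/l)$ and $\sin(t/l)$ are interchanged (with the appropriate sign).

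Next I would eliminate the $t$-dependence by invoking the defining equation $w = w_0$. For anti-de Sitter spacetime the tortoise coordinate is $r^* = \int f^{-2}\,dr = l\tan^{-1}(r/l)$, so on the cone $t = w_0 + r^*$ with $\tan(r^*/l) = r/l$. Because $lf = \sqrt{l^2 + r^2}$, this produces the key identities
\begin{align*}
\cos(r^*/l) = \frac{1}{f}, \qquad \sin(r^*/l) = \frac{r}{lf}.
\end{align*}
Expanding $\cos(t/l)$ and $\sin(t/l)$ via the angle-addition formulas and substituting these relations converts each inner product into a polynomial in $r$ whose coefficients are $\sin(w_0/l)$ and $\cos(w_0/l)$.

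The remaining step is algebraic simplification. After substitution, the $\cos(w_0/l)$ contributions to $\langle \mathcal{K},L\rangle$ cancel, leaving $\frac{r\tilde X^i}{f^2}\sin(w_0/l)\cdot\frac{r^2 + l^2}{l}$; since $f^2 = (l^2 + r^2)/l^2$ for anti-de Sitter, this collapses exactly to $r\cdot l\sin(w_0/l)\tilde X^i$, and symmetrically one obtains $r\cdot l\cos(w_0/l)\tilde X^i$ for $\mathcal{K}'$. The only point demanding care is this cancellation: one must track the cross terms from the angle-addition formulas so that the unwanted $r^2\cos(w_0/l)$ pieces annihilate against one another, which is precisely where the specific form of the anti-de Sitter warping factor is used. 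Beyond this bookkeeping there is no conceptual obstacle.
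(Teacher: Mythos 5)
Your proposal is correct and follows essentially the same route as the paper's proof: express $L$ in static coordinates, pair it with the Killing fields from the lemma (noting the angular parts drop out), then use $t = w_0 + r^*$ with $\cos(r^*/l) = 1/f$, $\sin(r^*/l) = r/(lf)$ and the angle-addition formulas to collapse the result to $r\cdot l\sin(w_0/l)\tilde X^i$ and $r\cdot l\cos(w_0/l)\tilde X^i$. The cancellation of the unwanted terms and the use of $lf = \sqrt{l^2+r^2}$ that you flag are exactly the computation carried out in the paper.
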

\begin{proof}
We have the relation \begin{align*}
\frac{\pl}{\pl v} &= \frac{1}{2} \lt( \frac{\pl}{\pl t} + f^2 \frac{\pl}{\pl r}\rt)\\
\frac{\pl}{\pl w} &= \frac{1}{2} \lt( \frac{\pl}{\pl t} - f^2 \frac{\pl}{\pl r} \rt).
\end{align*}
Recall that $L = \frac{2r}{f^2} \frac{\pl}{\pl v} $ and we have
\begin{align*}
\langle \mathcal{K},L \rangle &= \frac{r}{f}\tilde X^i \lt( -r \cos(t/l) + l\sin(t/l)\rt).
\end{align*}
Also, recall that $t = r_* + w_0 = l \tan^{-1}(r/l)+ w_0$ (the constant of integration does not matter), and hence by the addition formula, we obtain
\begin{align*}
-r \cos(t/l) + l\sin(t/l) &= -r \lt( \frac{l}{\sqrt{l^2+r^2}} \cos(w_0/l) - \frac{r}{\sqrt{l^2+r^2}}\sin(w_0/l) \rt) \\
&\quad + l \lt( \frac{r}{\sqrt{l^2+r^2}} \cos(w_0/l) + \frac{l}{\sqrt{l^2+r^2}} \sin(w_0/l) \rt)\\
&= lf \sin(w_0/l).
\end{align*}
The formula for $\langle \mathcal{K},L \rangle$ follows. $\langle \mathcal{K'},L \rangle$ is computed similarly.
\end{proof}

The situation of de Sitter spacetime is similar. We describe the results without proof. The $(n+1)$-dimensional de Sitter spacetime is defined as the quadric
\[ -(y^0)^2 + (y^1)^2 + (y^2)^2 + \cdots + (y^n)^2 + (y^{n+1})^2 = l^2 \]
embedded in $\R^{n+1,1}$ with the metric
\[ -(dy^0)^2 + (dy^1)^2 + (dy^2)^2 + \cdots + (dy^{n+1})^2. \]
De Sitter spacetime is a space form with sectional curvature $\frac{1}{l^2}$.
 
The {\it static patch} is given by the coordinate system
\begin{align*}
y^0 &= \sqrt{l^2-r^2} \sinh (t/l)\\
y^i &= r \tilde X^i, \quad i=1,\cdots,n\\
y^{n+1} &= \sqrt{l^2-r^2} \cosh (t/l)
\end{align*}
in which the metric is given by
\[ - f^2 dt^2 + \frac{1}{f^2}dr^2 + r^2 g_{S^{n-1}}\]
where $f(r) = \sqrt{1 - (r/l)^2}$.
 
The Killing vector fields $y^0 \frac{\pl}{\pl y^i} + y^i \frac{\pl}{\pl y^0}, i = 1,\cdots,n$ of $\R^{n+1,1}$
are tangent to the quadric and thus give rise to Killing vector fields of the de Sitter spacetime.
\begin{lem}
Let $\mathcal{K}$ denote the Killing vector fields of de Sitter spacetime given by $y^0 \frac{\pl}{\pl y^i} + y^i \frac{\pl}{\pl y^0}, i = 1,\cdots,n$. In the static patch,
\begin{align*}
\mathcal{K} = \frac{r\tilde X^i}{f} \cosh(t/l)\frac{\pl}{\pl t} + lf \tilde X^i \sinh(t/l)\frac{\pl}{\pl r} + W 
\end{align*}
where $W$ is tangent to the sphere of symmetry.
\end{lem}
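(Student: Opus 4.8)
The plan is to mirror, line for line, the computation already carried out for the anti-de Sitter Killing fields, replacing the trigonometric functions by their hyperbolic counterparts. Since by hypothesis $\mathcal{K}$ decomposes as $A\frac{\pl}{\pl t} + B\frac{\pl}{\pl r} + W$ with $W$ tangent to the sphere of symmetry, and since $W$ annihilates both coordinate functions $t$ and $r$, it suffices to extract the two coefficients $A = \mathcal{K}(t)$ and $B = \mathcal{K}(r)$ by applying the vector field to $t$ and $r$ directly.

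First I would extend the static-patch coordinates $t$ and $r$ off the quadric. From $y^0 = \sqrt{l^2 - r^2}\sinh(t/l)$ and $y^{n+1} = \sqrt{l^2 - r^2}\cosh(t/l)$ one reads off $y^0/y^{n+1} = \tanh(t/l)$, so that
\begin{align*}
t = l\tanh^{-1}\lt( \frac{y^0}{y^{n+1}} \rt), \qquad r^2 = (y^1)^2 + \cdots + (y^n)^2.
\end{align*}
Note that here the de Sitter case forces $\tanh^{-1}$ in place of the $\tan^{-1}$ appearing in anti-de Sitter, which is the structural source of the hyperbolic functions in the final answer.

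Next I would apply $\mathcal{K} = y^0\frac{\pl}{\pl y^i} + y^i\frac{\pl}{\pl y^0}$ to each function. Because $t$ depends only on $y^0$ and $y^{n+1}$, only the $\frac{\pl}{\pl y^0}$ term survives in $A$, while $r$ depends only on $y^1,\ldots,y^n$, so only the $\frac{\pl}{\pl y^i}$ term survives in $B$:
\begin{align*}
A = y^i\,\frac{\pl t}{\pl y^0} = y^i \cdot \frac{l\, y^{n+1}}{(y^{n+1})^2 - (y^0)^2}, \qquad B = y^0\,\frac{\pl r}{\pl y^i} = y^0\,\frac{y^i}{r}.
\end{align*}

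The final step is simplification using the static-patch relations together with $f = \sqrt{1 - (r/l)^2}$, equivalently $\sqrt{l^2 - r^2} = lf$. The one identity doing all the work is the hyperbolic Pythagorean relation
\begin{align*}
(y^{n+1})^2 - (y^0)^2 = (l^2 - r^2)\lt(\cosh^2(t/l) - \sinh^2(t/l)\rt) = l^2 - r^2 = l^2 f^2,
\end{align*}
which, with $y^{n+1} = lf\cosh(t/l)$ and $y^i = r\tilde X^i$, gives $A = \frac{r\tilde X^i}{f}\cosh(t/l)$; and $y^0 = lf\sinh(t/l)$ gives $B = lf\tilde X^i\sinh(t/l)$, as claimed. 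I do not anticipate a genuine obstacle here—the argument is bookkeeping parallel to the anti-de Sitter lemma—and the only point demanding care is the sign in this Pythagorean identity, since it is precisely the minus sign that converts the trigonometric coefficients of the anti-de Sitter case into the hyperbolic ones recorded in the statement.
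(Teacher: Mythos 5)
Your proof is correct and is exactly the argument the paper intends: the paper states this de Sitter lemma without proof, deferring to the anti-de Sitter computation, and your line-by-line hyperbolic transcription of that computation (extending $t = l\tanh^{-1}(y^0/y^{n+1})$, $r^2 = (y^1)^2+\cdots+(y^n)^2$ off the quadric, reading off $A = \mathcal{K}(t)$, $B = \mathcal{K}(r)$, and simplifying via $(y^{n+1})^2-(y^0)^2 = l^2-r^2 = l^2f^2$) is precisely that argument. No gaps.
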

\begin{rmk}
The Killing vector field $y^0 \frac{\pl}{\pl y^{n+1}}+ y^{n+1}\frac{\pl}{\pl y^0}$ gives rise to the time translating Killing vector field $l\frac{\pl}{\pl t}$ in the static patch.
\end{rmk} 
\begin{prop}
Let $\mathcal{K}$ denote the Killing vector fields of de Sitter spacetime given by $y^0 \frac{\pl}{\pl y^i} + y^i \frac{\pl}{\pl y^0}, i = 1,\cdots,n$. Then along a spacelike codimension 2 submanifold lying in the standard null cone $w=w_0$, we have
\begin{align*}
\langle \mathcal{K},L \rangle = r \cdot l \sinh(w_0/l)\tilde X^i.
\end{align*}
\end{prop}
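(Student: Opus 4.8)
The plan is to follow the template of the preceding anti-de Sitter proposition verbatim, replacing the trigonometric identities by their hyperbolic counterparts. First I would rewrite the null normal in static coordinates: since $\frac{\pl}{\pl v} = \frac{1}{2}\left(\frac{\pl}{\pl t} + f^2\frac{\pl}{\pl r}\right)$, the relation $L = \frac{2r}{f^2}\frac{\pl}{\pl v}$ gives $L = \frac{r}{f^2}\frac{\pl}{\pl t} + r\frac{\pl}{\pl r}$.

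Next I would pair $L$ with the expression for $\mathcal{K}$ from the preceding lemma. Because $W$ is tangent to the sphere of symmetry whereas $L$ has no angular component, the contribution of $W$ drops out, and using $\langle \frac{\pl}{\pl t}, \frac{\pl}{\pl t}\rangle = -f^2$, $\langle \frac{\pl}{\pl r}, \frac{\pl}{\pl r}\rangle = f^{-2}$ and $\langle \frac{\pl}{\pl t}, \frac{\pl}{\pl r}\rangle = 0$, I obtain the waypoint
\[ \langle \mathcal{K}, L\rangle = \frac{r\tilde X^i}{f}\left(-r\cosh(t/l) + l\sinh(t/l)\right). \]

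The last step is to simplify the scalar factor. On the null cone $w=w_0$ one has $t = r^* + w_0$ with tortoise coordinate $r^* = l\,\mathrm{arctanh}(r/l)$, so that $\cosh(r^*/l) = 1/f$ and $\sinh(r^*/l) = r/(lf)$ since $f = \sqrt{1-(r/l)^2}$. Substituting these into the hyperbolic addition formulas and collecting terms, the $\cosh(w_0/l)$ contributions cancel while the $\sinh(w_0/l)$ terms combine, via the de Sitter identity $l^2 - r^2 = l^2 f^2$, into $-r\cosh(t/l) + l\sinh(t/l) = lf\sinh(w_0/l)$. This yields $\langle \mathcal{K}, L\rangle = r\cdot l\sinh(w_0/l)\tilde X^i$, as asserted.

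I do not anticipate a genuine obstacle: each step mirrors the anti-de Sitter computation exactly, and the only point requiring care is tracking the signs and the identity $l^2 - r^2 = l^2 f^2$ specific to the de Sitter warping factor, which is precisely what turns the trigonometric cancellation of the anti-de Sitter case into its hyperbolic analogue here.
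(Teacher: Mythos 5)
Your proposal is correct, and it is exactly the argument the paper intends: the paper explicitly omits this proof, stating the de Sitter case is ``similar'' to the anti-de Sitter one, and your computation is precisely that hyperbolic analogue --- the waypoint $\langle \mathcal{K},L\rangle = \frac{r\tilde X^i}{f}\bigl(-r\cosh(t/l)+l\sinh(t/l)\bigr)$, the tortoise coordinate $r^*=l\,\mathrm{arctanh}(r/l)$ with $\cosh(r^*/l)=1/f$, $\sinh(r^*/l)=r/(lf)$, and the cancellation via $l^2-r^2=l^2f^2$ all check out.
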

\subsubsection{Sphere of symmetry in Lorentz space forms}\label{sphere of symmetry}
In this subsection, we study the image of a sphere $t=t_0,r=r_0$ under a Lorentz transformation. By a time translation, we assume that it lies in the future null cone of the origin $t=0, r=0$. 
\begin{prop}\label{Lorentz}
Let $\Sigma$ be a sphere $t=r_0, r=r_0$ in the Minkowski spacetime. Then the image of $\Sigma$ under the Lorentz transformation $T(y^0,\cdots,y^n) = (\cosh\beta\, y^0 + \sinh\beta\, y^n, y^1, \cdots, y^{n-1}, \sinh\beta\, y^0 + \cosh\beta\, y^n)$ is 
\begin{align*}
T(\Sigma) = (r,r\tilde X^1, \cdots, r\tilde X^n)
\end{align*}
where $r =\frac{r_0}{\cosh\beta - \sinh\beta \tilde X^n} $. 
\end{prop}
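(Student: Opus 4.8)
The plan is to parametrize the sphere, push it forward by the boost $T$, and then re-express the image as a radial graph over $S^{n-1}$, reading off the radial function. First I would write the sphere $t=r_0,\,r=r_0$ (which lies on the future null cone $t=r$ of the origin) as
\[
\Sigma = (r_0,\, r_0\tilde Y^1, \ldots, r_0 \tilde Y^n), \qquad \textstyle\sum_{i=1}^n (\tilde Y^i)^2 = 1,
\]
with $\tilde Y \in S^{n-1}$ the \emph{source} direction variable. Since $T$ acts only on the $y^0$ and $y^n$ slots, the image point has time coordinate $r_0(\cosh\beta + \sinh\beta\, \tilde Y^n)$ and spatial coordinates $(r_0\tilde Y^1, \ldots, r_0 \tilde Y^{n-1}, \, r_0(\sinh\beta + \cosh\beta\, \tilde Y^n))$.

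Next I would compute the spatial area radius $r$ of the image point. Using $\sum_{i<n}(\tilde Y^i)^2 = 1 - (\tilde Y^n)^2$ together with $\cosh^2\beta - \sinh^2\beta = 1$ and $1+\sinh^2\beta=\cosh^2\beta$, the cross terms and squares collapse into a perfect square, giving
\[
r^2 = r_0^2\,(\cosh\beta + \sinh\beta\, \tilde Y^n)^2 .
\]
Thus $r = r_0(\cosh\beta + \sinh\beta\, \tilde Y^n)$ equals the time coordinate, reflecting that the null cone $t=r$ is preserved by the Lorentz transformation. Consequently $T(\Sigma)$ is again a radial graph $(r, r\tilde X^1, \ldots, r\tilde X^n)$, where the \emph{target} direction is $\tilde X = \bigl(r_0\tilde Y^1, \ldots, r_0\tilde Y^{n-1},\, r_0(\sinh\beta + \cosh\beta\, \tilde Y^n)\bigr)/r \in S^{n-1}$.

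Finally I would eliminate the source variable $\tilde Y^n$ in favor of the target variable $\tilde X^n$. From $\tilde X^n = (\sinh\beta + \cosh\beta\, \tilde Y^n)/(\cosh\beta + \sinh\beta\, \tilde Y^n)$, computing $\cosh\beta - \sinh\beta\, \tilde X^n$ over the common denominator $\cosh\beta + \sinh\beta\, \tilde Y^n$ leaves the numerator $\cosh^2\beta - \sinh^2\beta = 1$, so
\[
\cosh\beta - \sinh\beta\, \tilde X^n = \frac{1}{\cosh\beta + \sinh\beta\, \tilde Y^n}.
\]
Combining this with $r = r_0(\cosh\beta+\sinh\beta\, \tilde Y^n)$ yields the claimed $r = r_0/(\cosh\beta - \sinh\beta\, \tilde X^n)$.

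The only genuine subtlety — the ``hard part'' — is bookkeeping this change of direction variable: the pushforward is naturally expressed in terms of the source direction $\tilde Y$, whereas the statement is phrased in terms of the target direction $\tilde X$, so one must invert the boost's action on $S^{n-1}$. Everything else is a routine algebraic simplification driven by $\cosh^2\beta - \sinh^2\beta = 1$.
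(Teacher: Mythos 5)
Your proof is correct and is essentially the same computation as the paper's: push the sphere forward by the boost, collapse the spatial radius into a perfect square via $\cosh^2\beta-\sinh^2\beta=1$, and then invert the boost's action on the direction variable to express $r$ in terms of the target direction. The only differences are cosmetic: you work in arbitrary dimension $n$ with Cartesian direction variables $\tilde Y^i,\tilde X^i$, whereas the paper carries out the identical steps for $n=3$ in spherical coordinates (sum of squares for $r(\theta')$, then the quotient giving $\tan\theta'$ and hence $\cos\theta'$).
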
 
\begin{proof}
We carry out the computation for $n=3$. $T(\Sigma)$ is given by the embedding
\[ r_0 (\cosh\beta + \sinh\beta \cos\theta, \sin\theta\cos\phi, \sin\theta\sin\phi, \sinh\beta + \cosh\beta\cos\theta). \]
To prove the assertion, we have to solve
\[ \begin{cases}
r_0 \sin\theta = r(\theta')\sin\theta' \\
r_0 (\sinh\beta + \cosh\beta \cos\theta) = r(\theta')\cos\theta'
\end{cases} \]
Taking the sum of squares of the equations, we get \[ r(\theta') = r_0(\sinh\beta \cos\theta + \cosh\beta).\] Taking the quotient of the equations, we get \[ \tan\theta' = \frac{\sin\theta}{\sinh\beta + \cosh\beta\cos\theta}. \]
Then we have $\cos\theta' = \frac{\sinh\beta + \cosh\beta\cos\theta}{\cosh\beta + \sinh\beta\cos\theta}$ and consequently 
\[ r(\theta') = \frac{r_0}{\cosh\beta - \sinh\beta\cos\theta'}. \] 
\end{proof}

The cases of anti-de Sitter and de Sitter spacetimes are identical. Taking the 4-dimensional anti-de Sitter spacetime with sectional curvature $-1$ for example. The null cone at the origin is given by $t = \tan^{-1} r$ in the static patch. We consider a sphere $\Sigma$ on this null cone given by $t = \tan^{-1} r_0, r= r_0$. It corresponds to the sphere with embedding
\begin{align*}
(r_0, r_0\tilde X^1, r_0\tilde X^2, r_0\tilde X^3,1)
\end{align*}
in $\R^{3,2}$. The Lorentz transformation 
\[ T(y^0, \cdots, y^4) = (\cosh\beta\, y^0 + \sinh\beta\, y^3,y^1,y^2,\sin\beta\,y^0 + \cosh\beta\,y^3,y^4) \] thus has the same effect on $\Sigma$ as in the case of Minkowski spacetime. 

We summarize the above discussion into the following theorem.
\begin{defn}\label{modes}
We identify $S^{n-1}$ as the unit sphere in $\R^n$. Let $\tilde X^i, i=1,\cdots, n$, be the restriction of Cartesian coordinate functions of $\R^n$ on $S^{n-1}$. A function $u$ on $S^{n-1}$ is said to be supported in $\ell \le 1$ modes if $u = a_0 + \sum a_i \tilde X^i$ for some constants $a_0, a_1, \cdots, a_n$. 
\end{defn}

\begin{theorem}\label{isometries}
Let $\Sigma$ be a spacelike codimension 2 submanifold in a standard null cone of a Lorentz space form. If the restriction of the radial coordinate $\frac{1}{r}$ on $\Sigma$ is supported in $\ell\le 1$ modes, then $\Sigma$ is a sphere of symmetry.
\end{theorem}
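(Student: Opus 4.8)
The plan is to reduce the statement to a uniqueness property of the radial function and then match $\Sigma$ with an explicitly boosted coordinate sphere. First I would observe that, after applying the time-translation isometry $t \mapsto t - w_0$ (which preserves the class of spheres of symmetry and leaves the radial function $r$ unchanged as a function on $S^{n-1}$), we may assume $\Sigma$ lies in the future null cone of the origin $w=0$. On this null cone one has $t = r^*$ and $v = 2r^*$, so the graph parametrization $F(x) = (v(x),0,x)$ is completely determined by $r \colon S^{n-1} \rw (0,\infty)$; hence two spacelike cross-sections of the same standard null cone coincide if and only if they have the same radial function. It therefore suffices to exhibit a sphere of symmetry lying in $\{w=0\}$ whose radial function is the prescribed $r$ with $\frac{1}{r} = a_0 + \sum_{i=1}^n a_i \tilde X^i$.

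Next I would extract the causality condition from positivity. Since $r>0$ on $S^{n-1}$ and $\min_{S^{n-1}} \sum_i a_i \tilde X^i = -|\vec a|$ for $\vec a = (a_1,\dots,a_n)$, the hypothesis forces $a_0 > |\vec a| \ge 0$. Applying a rotation in $SO(n)$ acting on $(y^1,\dots,y^n)$ — an isometry of every Lorentz space form that fixes $\{w=0\}$, maps spheres of symmetry to spheres of symmetry, and rotates $\vec a$ — I may assume $\vec a = (0,\dots,0,a_n)$, so that $\frac{1}{r} = a_0 + a_n \tilde X^n$ with $a_0 > |a_n|$. I would then solve for a boost parameter $\beta$ and a radius $r_0>0$ by $\tanh\beta = -a_n/a_0$ and $r_0 = (a_0^2 - a_n^2)^{-1/2}$; the inequality $|a_n/a_0|<1$ guarantees that $\beta$ exists, and a short computation gives $\cosh\beta/r_0 = a_0$ and $-\sinh\beta/r_0 = a_n$.

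Finally I would invoke Proposition \ref{Lorentz} (together with the identical computation for de Sitter and anti-de Sitter recorded after it) to identify the boosted coordinate sphere. Let $\Sigma_0$ be the coordinate sphere $r=r_0$ on the null cone $w=0$, and let $T$ be the boost of Proposition \ref{Lorentz} with parameter $\beta$. That proposition shows $T(\Sigma_0)$ has radial function $r = r_0/(\cosh\beta - \sinh\beta\, \tilde X^n)$, equivalently $\frac{1}{r} = \cosh\beta/r_0 - (\sinh\beta/r_0)\tilde X^n = a_0 + a_n \tilde X^n$, which is exactly the radial function of $\Sigma$. Since $T$ fixes the origin, $T(\Sigma_0)$ again lies in $\{w=0\}$, so by the uniqueness observation of the first paragraph, $\Sigma = T(\Sigma_0)$ is a sphere of symmetry.

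I expect the step requiring the most care to be the uniform treatment of the three space forms in the last paragraph: one must confirm that a boost acts on the radial function of a null-cone cross-section in the same way for de Sitter and anti-de Sitter as for Minkowski, despite the different embeddings in $\R^{n+1,1}$ and $\R^{n,2}$. This is where the explicit static-patch formulas for the boost-type Killing fields and the relation $r = \cos(w_0/l)\, y^0 - \sin(w_0/l)\, y^{n+1}$ (and its de Sitter analogue) enter; once these reproduce the Minkowski form $r_0/(\cosh\beta - \sinh\beta\, \tilde X^n)$, the remaining algebra is routine.
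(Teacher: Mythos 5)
Your proposal is correct and follows essentially the same route as the paper's own proof: normalize by a time translation and a rotation, use positivity of $r$ to get $a_0 > |\vec a|$, solve for the boost parameter $\beta$ and radius $r_0$, and identify $\Sigma$ with the boosted sphere of symmetry via Proposition \ref{Lorentz} and its (anti-)de Sitter analogues. Your write-up is in fact more detailed than the paper's (the explicit formulas $\tanh\beta = -a_n/a_0$, $r_0 = (a_0^2-a_n^2)^{-1/2}$, and the observation that cross-sections of a fixed standard null cone are determined by their radial functions), but the underlying argument is the same.
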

\begin{proof}
Again we work with $n=3$ to illustrate the idea. By a time translation and a rotation, we may assume $\Sigma$ lies in the null cone of the origin and $r^{-1}(x) = a + b\cos\theta$ with $a > |b|$. We then solve $r_0,\beta$ from $a,b$ such that $r^{-1} = r_0 (\cosh\beta + \sinh\beta \cos\theta)$. It follows that $\Sigma$ is the image of a sphere defined by $r=r_0$, $t = r_0$ $(\tan^{-1}r_0, \tanh^{-1}(r_0)$ resp.) under a Lorentz transformation in Minkowski (anti-de Sitter, de Sitter, resp.) spacetime.	
\end{proof}
\section{Infinitesimal Rigidity for \eqref{CNNC}} 

Let $\Sigma$ be a spacelike codimension 2 submanifold lying in a standard null cone given by the embedding $F(\theta^a) = (v(\theta^a), w=w_0, \theta^a)$ with null normals 
\begin{align*}
L &= \frac{2r}{f^2} \frac{\pl}{\pl v} \\
\Lb &= \frac{1}{r} \lt( 2 \frac{\pl}{\pl w} + f^2 \na v - \frac{f^2}{2}|\na v|^2 \frac{\pl}{\pl v} \rt).
\end{align*}
We study variations of $\Sigma$ in the spacetime so that \eqref{CNNC} is preserved infinitesimally. Since \eqref{CNNC} is preserved if $\Sigma$ moves within the standard null cone, it suffices to consider variations of $\Sigma$ in $\underline{\mathcal{C}}(\Sigma)$--the incoming null hypersurface of $\Sigma$. Extend $\Lb$ and $L$ to $\underline{C}(\Sigma)$ such that $D_{\Lb} \Lb=0$ and $\langle L,\Lb \rangle = -2$. Let $F(x,s): \Sigma \times [0,\veps) \rw \underline{\mathcal{C}}(\Sigma)$ be a variation of $\Sigma$. Let $\frac{\pl F}{\pl s}(x,0) = U(x)\Lb$ and $\Lb(s) = \psi(x,s)\Lb, L(s) = \frac{1}{\psi}L$ be the null normals of $F(\Sigma,s).$

It is proved in Proposition 3.4 of \cite{WWZ} that \eqref{CNNC} is equivalent to the existence of null normal vector fields $L,\Lb$ along $\Sigma$ such that $\langle H,L \rangle = const.$ and $\langle D_a \Lb,L \rangle=0$. Suppose the image of $F(x,s)$ satisfies \eqref{CNNC} for $0 \le s \le \epsilon$. Differentiating at $s=0$ motivates the following definition.  
\begin{defn}
$F(x,s)$ is said to preserve \eqref{CNNC} infinitesimally if \begin{align}\label{infinitesimal CNNC variation}
\begin{split}
\frac{\pl}{\pl s}& \Big|_{s=0} \pl_a \tr\chi =0 \\
\frac{\pl}{\pl s}& \Big|_{s=0} \langle D_a \Lb(s),L(s) \rangle =0.
\end{split}
\end{align}
\end{defn}
Isometry of the spacetime gives rise to solutions to \eqref{infinitesimal CNNC variation}. For any constant $c$, $U=cr$ is a always solution of \eqref{infinitesimal CNNC variation}. Such a solution corresponds to $\Sigma$ being moved by the time-translation Killing vector field $\frac{\pl}{\pl t}$. Moreover, when $(V,\bar{g})$ is a space form, there are additional solutions where the standard null cone is moved by an isometry of $(V,\bar{g})$. In case there are no further solutions to \eqref{infinitesimal CNNC variation}, a codimension 2 submanifold is said to be {\it infinitesimally rigid for \eqref{CNNC}}. 
\begin{defn}\label{def: inf CNNC rigid}
A spacelike codimension 2 submanifold lying in a standard null cone is said to be {\it infinitesimally rigid for \eqref{CNNC}} if all the solutions $U$ of \eqref{infinitesimal CNNC variation} are a constant multiple of $r$ or $U$ is equal to $r$ times a function on $S^{n-1}$ supported in the $\ell \le 1$ modes when $(V,\bar{g})$ is a space form.
\end{defn}

We need the following well-known theorem to prove the infinitesimally rigidity in space form. We include the proof for completeness. 
\begin{theorem}\label{well-known}
Let $(M,g)$ be a complete connected closed Riemannian manifold of dimension $n \ge 2$. Suppose $M$ is an Einstein manifold when $n \ge 3$ or has constant Gauss curvature when $n=2$. If there is a non-constant solution to
\begin{align}\label{concircular}
\na_i\na_j u = \frac{\Delta u}{n} g_{ij},
\end{align}
then $M$ is isometric to a round sphere $S^n(c) \subset \R^{n+1}$. Moreover, 
\[ u(x) = c_1 + c_2 x \cdot a \]
for some constants $c_1,c_2$ and point $a\in S^n(c)$. 
\end{theorem}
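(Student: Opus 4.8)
The plan is to carry out the classical Obata argument, with the Einstein/constant-curvature hypothesis entering only at the first contraction. Set $\phi = \tfrac{\Delta u}{n}$, so that \eqref{concircular} becomes $\na_i\na_j u = \phi\, g_{ij}$, i.e. the trace-free Hessian of $u$ vanishes. The first step is to differentiate once more, $\na_k\na_i\na_j u = (\na_k\phi)g_{ij}$, antisymmetrize in $k,i$, and use the Ricci identity to trade the commutator of third derivatives for a curvature term. This yields
\begin{equation*}
(\na_k\phi)\,g_{ij} - (\na_i\phi)\,g_{kj} = -R_{kij}{}^{\,l}\na_l u,
\end{equation*}
and tracing over $i,j$ gives $(n-1)\na_k\phi = -R_{kl}\na^l u$.

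Next I would invoke the curvature hypothesis. For $n\ge 3$ the Einstein condition together with Schur's lemma makes the scalar curvature $R$ constant and $R_{kl} = \tfrac{R}{n}g_{kl}$; for $n=2$ the constant Gauss curvature assumption gives the same identity directly. Substituting, $(n-1)\na_k\phi = -\tfrac{R}{n}\na_k u$, hence $\phi = -\lambda u + c$ with $\lambda = \tfrac{R}{n(n-1)}$. I would first dispose of $\lambda = 0$: then $\phi \equiv c$, so $\Delta u = nc$, and integrating over the closed manifold $M$ forces $c=0$, whence $\na_i\na_j u = 0$ and $u$ is harmonic, hence constant — contrary to hypothesis. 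So $\lambda \ne 0$, and subtracting $c/\lambda$ from $u$ (which preserves \eqref{concircular}) reduces the equation to the Obata form
\begin{equation*}
\na_i\na_j u = -\lambda\, u\, g_{ij}.
\end{equation*}
Contracting $\na_k(|\na u|^2) = 2\na^j u\,\na_k\na_j u$ with this identity gives $\na_k\big(|\na u|^2 + \lambda u^2\big) = 0$, so $|\na u|^2 + \lambda u^2 \equiv C$ is constant.

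I would then pin down the sign of $\lambda$ using compactness. Since $M$ is closed, $u$ attains a maximum at some $p$ with $\na u(p)=0$, so $C = \lambda\, u(p)^2$. If $\lambda < 0$ then $C \le 0$, which together with $|\na u|^2 = C - \lambda u^2 \ge 0$ forces $C=0$ and $u(p)=0$; the same reasoning at a minimum point gives $u\equiv 0$, again contradicting that $u$ is non-constant. Hence $\lambda > 0$, and after rescaling the metric by $\lambda$ and normalizing $u$ so that $C=1$ we may assume $|\na u|^2 = 1 - u^2$, with $u$ ranging over $[-1,1]$.

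Finally I would run the rigidity step. From $\na_{\na u}\na u = -u\,\na u$ the gradient trajectories are, after reparametrization by arc length $s$, unit-speed geodesics along which $u = \cos s$; these emanate from the unique maximum pole $p_+$ (where $u=1$), reach the unique minimum pole $p_-$ (where $u=-1$) at distance $\pi$, and foliate $M\setminus\{p_+,p_-\}$ by the geodesic spheres $\{u=\cos s\} = \partial B(p_+,s)$. Analyzing the induced geometry of these level sets shows the metric takes the warped-product form $ds^2 + \sin^2 s\, g_{S^{n-1}}$, so that $M$ is isometric to the unit round sphere; undoing the rescaling produces $S^n(c)\subset \R^{n+1}$ with $c = 1/\sqrt{\lambda}$, and under this isometry $u$ is a height function $c_1 + c_2\, x\cdot a$. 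I expect this last step to be the main obstacle: converting the ODE and level-set data into a genuine global isometry requires showing that the exponential map from $p_+$ is a diffeomorphism onto $M\setminus\{p_-\}$ and that the warping function is exactly $\sin s$, which is precisely where completeness and closedness of $M$ are essential.
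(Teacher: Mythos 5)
Your proposal shares its first reduction with the paper's proof: differentiating \eqref{concircular}, tracing, and using the Einstein condition plus Schur's lemma (constant Gauss curvature when $n=2$) to conclude that $\Delta u$ is an affine function of $u$, equivalently $\na_i\na_j u = -\lambda u\, g_{ij}$ with $\lambda = \tfrac{R}{n(n-1)}$ after normalizing $u$. From there the two arguments diverge: the paper simply invokes Theorem 2 of Tashiro \cite{Tashiro} to conclude that the closed manifold $M$ is a round sphere, and then identifies $u$ via umbilicity of its level sets; you instead set out to reprove the Obata rigidity from scratch via the first integral $|\na u|^2 + \lambda u^2 = C$ and a warped-product analysis. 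That is a legitimate, classical route and would make the theorem self-contained, but it obligates you to actually prove the final step --- that the gradient-flow and level-set structure force $g = ds^2 + \sin^2 s\, g_{S^{n-1}}$ --- and you explicitly leave this as a sketch, calling it ``the main obstacle.'' So as written the proposal is incomplete exactly at the point the paper outsources to the literature. To close it you must either carry out the polar-coordinate argument (umbilicity of the level sets gives $\pl_s \sigma_s = 2\cot s\,\sigma_s$, hence $\sigma_s = \sin^2 s\, h$ for a fixed metric $h$, and smoothness of $g$ at the pole forces $h$ to be the unit round metric, with injectivity of the exponential map on $M\setminus\{p_-\}$ coming from $u=\cos s$ along every unit-speed geodesic from $p_+$), or cite Obata/Tashiro as the paper does.

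There is also one concretely flawed deduction: the elimination of $\lambda<0$. You argue that $C\le 0$ together with $|\na u|^2 = C - \lambda u^2 \ge 0$ ``forces $C=0$,'' but when $\lambda<0$ the term $-\lambda u^2$ is nonnegative, so the inequality $C - \lambda u^2 \ge 0$ holds automatically wherever $u^2 \ge |C|/|\lambda|$ and forces nothing. The conclusion is correct and the fix is short: evaluating the first integral at a maximum point $p$ and a minimum point $q$ gives $\lambda u(p)^2 = C = \lambda u(q)^2$, so in the non-constant case $u(q)=-u(p)$ and $u^2 \le u(p)^2$ on all of $M$; then $|\na u|^2 = \lambda\bigl(u(p)^2 - u^2\bigr) \le 0$, so $u$ is constant, a contradiction. (Alternatively, avoid $C$ altogether: at a maximum the Hessian $\na^2 u = |\lambda| u\, g$ is negative semidefinite, forcing $u(p)\le 0$, while at a minimum $u(q)\ge 0$; with $u(q)\le u(p)$ this gives $u\equiv 0$.) Your remaining reductions --- the case $\lambda=0$, the first integral, and $u=\cos s$ along normalized gradient trajectories --- are correct.
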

\begin{proof}
Taking the divergence of \eqref{concircular}, we see that $\Delta u - \frac{R}{n-1} u$ is a constant function where $R$ is the (constant) scalar curvature. Since $M$ is closed, by Theorem 2 of \cite{Tashiro}, $M$ is isometric to a round sphere. Note that $\eqref{concircular}$ implies that the level sets of $u$ are umbilical and hence are geodesic spheres on a round sphere. The second assertion then follows.  
\end{proof}

We are ready to prove Theorem \ref{inf CNNC rigidity}.

\begin{theorem}[Theorem \ref{inf CNNC rigidity}]
Let $\Sigma$ be a spacelike codimension 2 submanifold lying in a standard null cone of a static spherically symmetric spacetime satisfying Assumption \ref{spacetime_assumption}. Then $\Sigma$ is infinitesimally rigid for \eqref{CNNC}. 
\end{theorem}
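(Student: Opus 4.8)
The plan is to linearize the two conditions in \eqref{infinitesimal CNNC variation} by hand and reduce them to a single second order equation for $w := U/r$ on the round sphere $(S^{n-1},g_{S^{n-1}})$, the aim being to land exactly on the concircular equation \eqref{concircular} in the space form case and on a version of it corrected by the sign-definite factor $\frac{f^2-1}{r^2}-\frac{ff'}{r}$ of \eqref{differential inequality} in general. The normalization $U\mapsto U/r$ is natural: the time translation $\frac{\pl}{\pl t}$, whose $\Lb$-component is a constant multiple of $r$ (from $\langle \frac{\pl}{\pl t},L\rangle=-r$), must always be a solution, and the boosts of Section 2.3 have $\Lb$-component equal to $r$ times an $\ell\le 1$ function.

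First I would record the background geometry of $\Sigma\subset\{w=w_0\}$. Since $dw=0$ along $\Sigma$, the induced metric is $\sigma_{ab}=r^2(g_{S^{n-1}})_{ab}$ with $r=r(v)$ and $\frac{dr}{dv}=\frac{f^2}{2}$, so $(\Sigma,\sigma)$ is conformal to the round sphere. The computation $D_{\pl_a F}L=\pl_a F$ from Section 2 gives $\chi_{ab}=\sigma_{ab}$, hence $\tr\chi=n-1$ is constant, and I would likewise compute $\chib_{ab}$, $\tr\chib$ and the torsion $\zeta_a=(\alpha_H)_a$ explicitly in terms of $f$, $r$ and the covariant derivatives of $v$. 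In the distinguished frame of \cite[Prop. 3.4]{WWZ}, \eqref{CNNC} is equivalent to the constancy of $\tr\chi$ (equivalently of $\langle H,L\rangle$) together with the vanishing of the torsion $\langle D_a\Lb,L\rangle=-2\zeta_a$, so the two identities in \eqref{infinitesimal CNNC variation} are exactly the linearizations of these two statements.

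Next I would compute the two first variations along $\frac{\pl F}{\pl s}=U\Lb$, keeping track of the frame rescaling $\Lb(s)=\psi\Lb$, $L(s)=\psi^{-1}L$. Differentiating the spacetime null propagation (Riccati/cross-focusing) equations in the $\Lb$ direction yields $\delta(\tr\chi)$ as a second order operator in $U$ and $\delta\langle D_a\Lb,L\rangle$ as a first order operator in $U$; the ambient curvature enters only through the single radial sectional curvature component, which under Assumption \ref{spacetime_assumption} is controlled by $\Phi:=\frac{f^2-1}{r^2}-\frac{ff'}{r}$. Condition one, $\pl_a\delta(\tr\chi)=0$, says $\delta(\tr\chi)$ is constant on $\Sigma$, while condition two lets me eliminate the transverse derivative of $U$. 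Substituting $\chi_{ab}=\sigma_{ab}$, $\tr\chi=n-1$ and the explicit $\chib,\zeta$, I expect the two to combine into an equation whose trace free part reads
\[
\na_i\na_j w-\frac{\Delta w}{n-1}(g_{S^{n-1}})_{ij}=\Phi\, Q_{ij}[w],
\]
where $\na,\Delta$ are those of the round metric and $Q_{ij}[w]$ is an explicit expression, first order in $w$ and built from $\na r$, which vanishes on constants (consistent with $U=cr$ always solving the system).

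I would then finish by solving this equation. In the Lorentz space form case $\Phi\equiv 0$, so $w$ satisfies the concircular equation \eqref{concircular} on the round $S^{n-1}$; by Theorem \ref{well-known} with $n$ replaced by $n-1$ its solutions are exactly the $\ell\le 1$ modes, giving $U=r\cdot(\text{$\ell\le1$ function})$ and matching the boost list of Section 2.3. In the general case I would pair the equation with $w$ in the appropriate (possibly $r$-weighted) inner product and integrate over $S^{n-1}$: the Bochner identity together with the spectral gap $\lambda_1(S^{n-1})=n-1$ bounds the trace free Hessian term below, and the sign $\Phi\le 0$ from \eqref{differential inequality} forces the correction term to the opposite sign, squeezing $w$ into $\ell\le 1$ and then, wherever the inequality is strict, killing the $\ell=1$ part to leave $w\equiv\text{const}$, i.e. $U=cr$. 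The main obstacle is the third paragraph's computation: deriving $\delta(\tr\chi)$ and $\delta\langle D_a\Lb,L\rangle$ from the spacetime propagation equations rather than intrinsic variation formulas, handling the conformal weight of $\sigma=r^2 g_{S^{n-1}}$ and the rescaling $\psi$ consistently, and isolating the curvature contribution so that $\Phi$ appears with the correct sign---this sign is exactly what makes the final integration by parts close.
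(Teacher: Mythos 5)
Your plan follows the same skeleton as the paper's proof: normalize $U=wr$ (the paper writes $U=ur$), use $\chi_{ab}=\sigma_{ab}$, $\tr\chi=n-1$ and the explicit $\chib_{ab}$, eliminate the frame-rescaling term between the two linearized conditions, pass to the round metric via $\sigma=r^2\tilde\sigma$, and close with a weighted integration by parts, the sign of $\Phi:=\frac{f^2-1}{r^2}-\frac{ff'}{r}$, and Theorem \ref{well-known}. One correction of wording: the second condition in \eqref{infinitesimal CNNC variation} is used to eliminate $\na_a\psi'$, the derivative of the frame rescaling, not any ``transverse derivative of $U$'' ($U$ is a function on $\Sigma$, and the variation is already purely in the $\Lb$ direction).

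The genuine gap is the pointwise structure you predict in your third paragraph. The linearized system does \emph{not} combine into a second-order tensor equation whose trace-free part reads $\tilde\na_a\tilde\na_b w-\frac{\tilde\Delta w}{n-1}\tilde\sigma_{ab}=\Phi\,Q_{ab}[w]$. What the computation actually produces (the paper's \eqref{scalar infinitesimal CNNC equation}, after inserting $U=wr$, subtracting the known solution $w\equiv1$, and rewriting in $\tilde\sigma$) is a one-form equation, third order in $w$:
\begin{align*}
0=\frac{1}{r}\,\tilde\na_a\Bigl((\tilde\Delta+n-1)w\Bigr)
+\frac{n-1}{r^{2}}\Bigl(\tilde\na_a\tilde\na_b w-\tfrac{1}{n-1}\tilde\Delta w\,\tilde\sigma_{ab}\Bigr)\tilde\na^b r
+(n-1)\,r\,\Phi\,\tilde\na_a w .
\end{align*}
The trace-free Hessian enters only contracted against $\tilde\na r$, and there is a genuinely third-order exact term, so even when $\Phi\equiv0$ you cannot read off the concircular equation \eqref{concircular} pointwise; your space-form case does not close as stated. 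The repair is essentially your own fourth paragraph, and it is exactly what the paper does: pair the equation with $r^{n-1}\tilde\na^a w$ (the $r$-weighted gradient, not $w$ itself), integrate over $S^{n-1}$, and use the Ricci identity $Ric(\tilde\sigma)=(n-2)\tilde\sigma$ to convert the third-order term into the full trace-free Hessian, arriving at \eqref{Ricci identity1}:
\begin{align*}
0=\int_{S^{n-1}}\Bigl[(n-1)\,r^{n}\,\Phi\,|\tilde\na w|^{2}
-\tfrac{n-1}{n-2}\,r^{n-2}\Bigl|\tilde\na^{2}w-\tfrac{1}{n-1}(\tilde\Delta w)\tilde\sigma\Bigr|^{2}\Bigr].
\end{align*}
Both integrands are non-positive pointwise, so both vanish identically; in particular no spectral gap is needed, and with the weight $r^{n-2}$ the inequality $\int(\tilde\Delta w)^2\ge(n-1)\int|\tilde\na w|^2$ would not apply directly, so do not lean on it. Vanishing of the trace-free Hessian is \eqref{concircular}; then either $w$ is constant, or $\Phi$ vanishes on the relevant range of $r$, which is the space-form case, where Theorem \ref{well-known} gives $w$ supported in $\ell\le1$ modes, as you conclude.
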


\begin{proof}
We write (\ref{infinitesimal CNNC variation}) into a differential equation of $U$. The variation of $L$ is given by
\begin{align}
\langle D_{\pl_s} L(s), \pl_a \rangle \big|_{s=0} &= -\langle L, D_a (u\Lb) \rangle = 2 \na_a U, \\
\langle D_{\pl_s} L(s),\Lb(s) \rangle \big|_{s=0} &= -\langle L, \psi' \Lb \rangle = 2 \psi',
\end{align}
where $\psi' = \frac{\pl\psi}{\pl s}(x,0).$
Hence, we have
\begin{align}
\frac{\pl}{\pl s} &\Big|_{s=0} \pl_a \tr\chi =
 \pl_a \lt( -U \tr\chib + 2 \Delta U - 2 (n-1)\psi' + U \sigma^{bc} \bar{R}(\Lb, \pl_b, \pl_c, L) \rt)\\
\frac{\pl}{\pl s}& \Big|_{s=0} \langle D_a \Lb(s),L(s) \rangle = 2 \chib_{ab} \na^b U + U \bar{R}(\Lb, \pl_a,L,\Lb) - 2 \na_a \psi'
\end{align}
where the first equation follows from
\begin{align*}
&\frac{\pl}{\pl s} \Big|_{s=0} \tr\chi \\
&=\frac{\pl}{\pl s} \Big|_{s=0} \sigma^{bc} \langle D_b L, \pl_c \rangle)\\
&= -2U\chib^{bc}\sigma_{bc} + \sigma^{bc} \lt( \bar R(U\Lb,\pl_b,\pl_c,L) + \langle D_b (-2\psi' L), \pl_c \rangle + \langle D_b L, D_c (U\Lb) \rangle \rt)\\
&= -2U\tr\chib + U \sigma^{bc} \bar R(\Lb,\pl_b,\pl_c,L) - 2(n-1)\psi' + U \tr\chib.
\end{align*}
By Proposition \ref{curvature}, we have
\begin{align}\label{ambient curvature}
\begin{split}
\sigma^{bc} \bar{R}(\Lb,\pl_b,\pl_c,L) &= -2(n-1)\frac{ff'}{r} \\
\bar{R}(\Lb,\pl_a,L,\Lb) &= \frac{4}{r} \frac{\pl r}{\pl\theta^a} \lt( -(ff')' + \frac{ff'}{r} \rt).
\end{split}
\end{align}
Equation (\ref{infinitesimal CNNC variation}) implies
\begin{align}\label{scalar infinitesimal CNNC equation}
- \na_a ( U\tr\chib) + &2 \na_a \Delta U -2(n-1) \na_a \lt( \frac{ff'}{r} U \rt) \notag\\
&- (n-1) \chib_{ab}\na^b U - 2(n-1)U \frac{\na_a r}{r} \lt( (ff')' - \frac{ff'}{r}\rt) =0. 
\end{align}
Let $U = u\cdot r$ in (\ref{scalar infinitesimal CNNC equation}) and we get  
\begin{align}\label{scalar inf CNNC eq'}
-\na_a (ur \tr\chib) - &2(n-1) \na_a \lt( \frac{ff'}{r} ur \rt) + 2 \na_a (\Delta(ur)) \notag\\
&+ 2(n-1) u \na_a r \lt( (ff')' - \frac{ff'}{r} \rt) - (n-1) \chib_{ab} \na^b(ur) =0
\end{align} On $\Sigma$, we have 
\begin{align}
\chib_{ab} &= -\frac{1}{r^2} \lt( (f^2+|\na r|^2) \sigma_{ab} - 2r \na_a\na_b r \rt), \label{chibar of surfaces in standard null cone}\\
\tr\chib &= -\frac{1}{r^2} \lt( (n-1) \lt( f^2+|\na r|^2 \rt) - 2 r\Delta r \rt). \label{trace chibar of surfaces in standard null cone}
\end{align} 
Combining (\ref{scalar inf CNNC eq'}), (\ref{chibar of surfaces in standard null cone}) and (\ref{trace chibar of surfaces in standard null cone}) and the fact that $u=1$ is a solution of  (\ref{scalar inf CNNC eq'}), we have 
\begin{align*}
0 &= \frac{n-1}{r} (f^2+|\na r|^2)\na_a u - 2(n-1)ff' \na_a u + 2 \na_a (\Delta u \cdot r + 2 \na_b u \na^b r) \\
&\quad + \frac{n-1}{r} \lt( (f^2+|\na r|^2) \sigma_{ab} - 2r \na_a\na_b r \rt) \na^b u \\
&= 2(n-1) \lt( \frac{f^2+|\na r|^2}{r} -ff' \rt)\na_a u + 2 \na_a (\Delta u \cdot r) + 4 \na_a\na_b u \na^b r \\
&\quad + 4 \na^b u \na_a\na_b r - 2(n-1)\na_a\na_b r \na^b u.
\end{align*} 
Note that the induced metric of $\Sigma$ is conformal to the standard metric $\tilde{\sigma}$ of $S^{n-1}$: $\sigma = r^2 \tilde{\sigma}$. We rewrite the equation in terms of $\tilde{\sigma}$:
\begin{align}
0 = (n-1)\lt( \frac{f^2}{r} - ff' \rt)\tilde{\na}_a u + \tilde{\na}_a \lt( \frac{\tilde{\Delta} u}{r} \rt) +(n-1) \tilde{\na}_a \tilde{\na}_b u \frac{\tilde{\na}^b r}{r^2} \notag
\end{align}
We multiply the equation by $r^{n-1} \tilde{\na}^a u$ and integrate by parts to get 
\begin{align}\label{Ricci identity1}
\begin{split}
0&= \int_{S^{n-1}} \Big[ (n-1)r^{n-2}(f^2-rff') |\tilde{\na} u|^2 \\
&\qquad \qquad+ r^{n-2} \tilde{\na}_a \tilde{\Delta} u \tilde{\na}^a u - \frac{1}{n-2} \tilde{\Delta}u \tilde{\na}_a (r^{n-2}) \tilde{\na}^a u \\
&\qquad\qquad + \frac{n-1}{n-2} \tilde{\na}_a \tilde{\na}_b u \tilde{\na}^b (r^{n-2}) \tilde{\na}^a u \Big] \\
&= \int_{S^{n-1}} \Bigg[ (n-1)r^{n-2} \lt( f^2-1 -rff' \rt) |\tilde{\na} u|^2 \\
&\qquad\qquad -\frac{n-1}{n-2} r^{n-2} \lt( |\tilde{\na}^2 u|^2 - \frac{1}{n-1} (\tilde{\Delta} u)^2 \rt) \Bigg].
\end{split} 
\end{align}
In the second equality, we make use of the Ricci identity with $Ric(g_{S^{n-1}}) = (n-2) g_{S^{n-1}}$. Here the integral is taken with respect to the volume form of the standard metric $g_{S^{n-1}}$.   
Note that \[ |\tilde{\na}^2 u|^2 - \frac{1}{n-1} (\tilde{\Delta} u)^2 = \lt| \tilde{\na}^2 u - \frac{1}{n-1} (\tilde{\Delta} u) \tilde{\sigma} \rt|^2. \] By \eqref{differential inequality}, either $u = const.$ or the equality of \eqref{differential inequality} holds and $(V,\bar{g})$ is a space form. In the latter case, $u$ is supported in $\ell\le 1$ modes by Theorem \ref{well-known}.
\end{proof}

Let $(N,g_N)$ be an $(n-1)$-dimensional Einstein manifold with $Ric(g_N) = (n-2)g_N$. The above infinitesimal rigidity holds in the $(n+1)$-dimensional static warped product spacetimes
\begin{align*}
\bar g = -\frac{1}{f^2(r)}dt^2 + f^2(r) dr^2 + r^2 g_N
\end{align*}
where the warping factor $f$ satisfies \eqref{differential inequality}. To see this, observe that the curvature of $N$ is only used in \eqref{ambient curvature}.  

\section{Codimension 2 submanifolds with constant norm mean curvature vector}
We first review a fundamental result in 2-dimensional conformal geometry. The stereographic projection identifies $S^2 \subset \R^3$ and $\mathbb{C}$ with
\begin{align*}
z = \frac{x_1+ix_2}{1-x_3}.
\end{align*}
It is well-known that all conformal transformations on $S^2$ arise as fractional linear transformations 
\begin{align*}
w= \frac{az+b}{cz+d}, ad-bc \neq 0
\end{align*}
on $\mathbb{C} \cup \infty$.

Under a conformal transformation, the standard metric on $S^2$, $\frac{4}{(1 +|z|^2)^2} |dz|^2$ in complex coordinate, is changed by \begin{align*}
\frac{4}{(1+|w|^2)^2}|dw|^2 = \frac{4}{(|cz+d|^2+|az+b|^2)^2}|dz|^2 = r^2 \frac{4}{(1+|z|^2)^2}|dz|^2
\end{align*}
where the conformal factor $r$ satisfies
\begin{align*}
\frac{1}{r} &= \frac{|cz+d|^2+|az+b|^2}{1+|z|^2}\\
&=\frac{(|a|^2+|c|^2)|z|^2 + |b|^2+ |d|^2 + \mbox{Re}\lt( z(a\bar{b}+ c\bar{d}) \rt)}{1+|z|^2} \\
&= (|a|^2+|b|^2+|c|^2+|d|^2) + \mbox{Re}(a\bar{b}+ c\bar{d}) x_1 \\
&\quad- \mbox{Im}(a\bar{b}+ c\bar{d}) x_2 +(|a^2|+ |c|^2 -|b|^2 -|d|^2) x_3.
\end{align*}

Let $\tilde\sigma$ denote the standard metric on $S^2$. Denote the covariant derivative and Laplacian with respect to $\tilde\sigma$ by $\tilde\na$ and $\tilde\Delta$. For a metric $e^{2w} \tilde\sigma$ to have constant Gauss curvature $E$, the conformal factor satisfies the equation
\begin{align}
\tilde{\Delta} w + E e^{2w}=1 \label{PSG}.
\end{align}
Let $r=e^w$, $u = \frac{1}{r}$ and (\ref{PSG}) becomes 
\begin{align}
u^2 + u \tilde{\Delta} u - |\tilde{\na} u|^2 = E. \label{PSG''}
\end{align}
The above discussion thus gives a function-theoretic proof of the following uniqueness result. Indeed, there exists a diffeomorphism $\vphi: S^2 \rw S^2$ such that $\vphi^*\tilde\sigma = E^{-1} e^{2w}\tilde\sigma$. Since $\vphi$ is a conformal map, the assertion follows.  
\begin{theorem}\label{Liouville}
All the positive solutions of (\ref{PSG''}) are supported in the $\ell\le 1$ modes.
\end{theorem}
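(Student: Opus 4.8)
The plan is to read (\ref{PSG''}) as a statement about a conformal metric and then to classify the relevant metrics on $S^2$ by uniformization. Writing $r = 1/u = e^w$ with $u > 0$, the derivation leading to (\ref{PSG''}) is reversible: a positive solution $u$ produces a genuine smooth metric $g = r^2\tilde\sigma = e^{2w}\tilde\sigma$, and substituting $w = -\log u$ into the Gauss curvature formula $K_g = e^{-2w}(1 - \tilde\Delta w)$ recovers exactly (\ref{PSG}), so $K_g \equiv E$. By Gauss--Bonnet, $E\,\mathrm{Area}(g) = 2\pi\chi(S^2) = 4\pi$, hence $E > 0$ and $g$ is a metric of constant positive Gauss curvature lying in the conformal class of the round metric $\tilde\sigma$.

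The heart of the argument is the uniformization/Liouville input. Since $E\,g$ has constant curvature $1$, it is isometric to $(S^2,\tilde\sigma)$: there is an isometry $\vphi\colon (S^2, E\,g) \to (S^2,\tilde\sigma)$, so that $\vphi^*\tilde\sigma = E\,g = E e^{2w}\tilde\sigma$. In particular $\vphi^*\tilde\sigma$ is pointwise conformal to $\tilde\sigma$, so $\vphi$ is automatically a conformal diffeomorphism of $S^2$. By Liouville's theorem, every conformal diffeomorphism of $S^2$ is a fractional linear transformation $z \mapsto \frac{az+b}{cz+d}$ in the stereographic coordinate.

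It then remains only to read off the conformal factor, which is precisely the computation carried out in the discussion preceding the theorem: for such a transformation one has $\vphi^*\tilde\sigma = \rho^2\tilde\sigma$ with $1/\rho$ a linear combination of $1, x_1, x_2, x_3$. Comparing with $\vphi^*\tilde\sigma = E u^{-2}\tilde\sigma$ gives $\rho = \sqrt{E}\,/\,u$, so $u = \sqrt{E}\,(1/\rho)$ is itself such a linear combination; that is, $u$ is supported in the $\ell \le 1$ modes in the sense of Definition \ref{modes}.

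I expect the uniformization step to be the main obstacle, in the sense that it is the one nonelementary ingredient. One might hope instead to show directly that every positive solution has vanishing trace-free Hessian $\tilde\na_a\tilde\na_b u - \tfrac12(\tilde\Delta u)\tilde\sigma_{ab} = 0$ and then conclude by Theorem \ref{well-known} with $n = 2$. However, the integrated Bochner identity on $S^2$ only yields the tautology $\int_{S^2}\big|\tilde\na^2 u - \tfrac12(\tilde\Delta u)\tilde\sigma\big|^2 = \tfrac12\int_{S^2}(\tilde\Delta u)^2 - \int_{S^2}|\tilde\na u|^2$, whose right-hand side is the same nonnegative spectral quantity and is not driven to zero by (\ref{PSG''}) alone. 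This reflects the fact that the conformal group of $S^2$ is noncompact, so the positive solutions form a genuine multi-parameter family; it is exactly this global feature that forces one to use the conformal classification rather than a pointwise positivity estimate.
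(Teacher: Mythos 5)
Your main argument is correct, and it is essentially the paper's own proof: the ``function-theoretic proof'' sketched in the paragraph preceding the theorem is exactly your route --- reverse the derivation of (\ref{PSG''}) to get a constant-curvature conformal metric, use uniformization to produce a conformal diffeomorphism $\vphi$ of $(S^2,\tilde\sigma)$, invoke Liouville's classification of such maps as fractional linear transformations, and read off the conformal factor from the computation at the start of Section 4. Your write-up supplies details the paper leaves implicit (Gauss--Bonnet to get $E>0$, the isometry to the round sphere), and your scaling $\vphi^*\tilde\sigma = E e^{2w}\tilde\sigma$ is the correct one (the paper's factor $E^{-1}$ there is a typo).

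Your closing paragraph, however, makes a false claim, and the paper itself disproves it: immediately after the theorem statement it gives an ``analytical proof'' of precisely the kind you assert cannot exist. You are right that integrating the Bochner consequence of (\ref{PSG''}) with weight $1$ is vacuous, since $\int_{S^2}\bigl(|\tilde\na^2 u|^2-(\tilde\Delta u)^2+|\tilde\na u|^2\bigr)=0$ holds for \emph{every} function $u$. But the positivity of $u$ can be exploited pointwise rather than in an integral: applying $\tilde\Delta$ to (\ref{PSG''}) and using the Bochner formula gives
\begin{align*}
u\,\tilde\Delta\bigl[(\tilde\Delta+2)u\bigr] \;=\; 2\lt|\tilde\na^2 u-\tfrac12(\tilde\Delta u)\,\tilde\sigma\rt|^2\;\ge\;0,
\end{align*}
so $(\tilde\Delta+2)u$ is subharmonic on the closed surface $S^2$, hence constant by the maximum principle; then $\tilde\Delta u = c-2u$ says $u-\tfrac{c}{2}$ lies in the first eigenspace of $\tilde\Delta$, which is spanned by $x_1,x_2,x_3$, and this is exactly the $\ell\le 1$ conclusion of Definition \ref{modes}. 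Alternatively, multiplying the same identity by $u^{-1}$ \emph{before} integrating (this is the $n=3$, $f\equiv 1$ case of the paper's proof of Theorem \ref{constant mean curvature vector norm}) turns the terms $2u\tilde\Delta u$ and $u\tilde\Delta^2 u$ into exact integrals that vanish, leaving $\int_{S^2}u^{-1}\lt|\tilde\na^2 u-\tfrac12(\tilde\Delta u)\,\tilde\sigma\rt|^2=0$, after which Theorem \ref{well-known} applies. The noncompactness of the conformal group is not an obstruction to such arguments; it only forces the rigidity equation $\tilde\na^2 u=\tfrac12(\tilde\Delta u)\,\tilde\sigma$ to admit a multi-parameter family of solutions, which it does.
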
 

We now present an analytical proof. The equation is equivalent to
\begin{align*}
\tilde{\Delta} \left( u^2 + u \tilde{\Delta} u - |\tilde{\na} u|^2 \right) =0.
\end{align*}
By the Bochner formula, we have
\begin{align*}
2u \tilde{\Delta} u + (\tilde{\Delta} u)^2 + u \tilde{\Delta}^2 u - 2|\tilde{\na}^2 u|^2 =0,
\end{align*}
which is equivalent to 
\begin{align*}
u \cdot \tilde{\Delta}(\tilde{\Delta} + 2)u - (u_{11} - u_{22})^2 =0.
\end{align*}
The maximum principle implies that $(\tilde{\Delta} + 2)u$ is a constant function.

\begin{remark}
It is mentioned in \cite[page 15]{Chang} that Struwe and Uhlenbeck have an analytic proof.
\end{remark}
The constant Gauss curvature equation \eqref{PSG} has an interpretation in Minkowski spacetime geometry. Let $\Sigma \subset \mathbb{R}^{3,1}$ be a spacelike 2-surface lying in the outgoing null cone of the origin given by the embedding $F:S^2 \rw \mathbb{R}^{3,1}, F(\theta,\phi) = (r(\theta,\phi), r(\theta,\phi),\theta,\phi).$ The induced metric of $\Sigma$ is $r^2 \tilde{\sigma}$. By the Gauss equation, the square norm of the mean curvature vector of $\Sigma$ equals its Gauss curvature. By Proposition \ref{Lorentz}, Theorem \ref{Liouville} says surfaces with constant norm mean curvature vector must come from spheres of symmetry by isometries. 

This can be generalized to $(n+1)$-dimensional static spherically symmetric spacetimes.
\begin{theorem}[Theorem \ref{constant mean curvature vector norm}]
Let $\Sigma$ be a spacelike codimension 2 submanifold lying in the standard null cone of a static spherically symmetric spacetime $(V,\bar{g})$ satisfying Assumption \ref{spacetime_assumption}. Suppose the mean curvature vector of $\Sigma$ has constant norm. Then $\Sigma$ is a sphere of symmetry.\end{theorem}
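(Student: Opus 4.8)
The plan is to reduce the constant-norm hypothesis to a scalar equation on $S^{n-1}$ and then run an integration-by-parts argument parallel to the proof of Theorem \ref{inf CNNC rigidity}, feeding the outcome into Theorem \ref{well-known}. First I would record that for a surface in the null cone, $\vec H = \Lb + \psi L$ with $|\vec H|^2 = -4\psi$ (Lemma \ref{surface on standard null cone has CNNC}), and that $\psi$ is a fixed multiple of $\tr\chib$: since the computation $D_{\pl_a F} L = \pl_a F$ gives $\chi_{ab} = \sigma_{ab}$, hence $\tr\chi = n-1$, one has $|\vec H|^2 = -(n-1)\tr\chib$. Thus the hypothesis is equivalent to $\tr\chib \equiv \text{const}$. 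Using \eqref{trace chibar of surfaces in standard null cone} together with the conformal relation $\sigma = r^2\tilde\sigma$, I would rewrite this, with $u = 1/r$, as
\[ (n-1) f^2 u^2 - (n-1)|\tilde\na u|^2 + 2 u\, \tilde\Delta u = \text{const} \]
on $(S^{n-1},\tilde\sigma)$.

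Second, I would isolate the conceptual content: the left-hand side equals $\frac{R_\sigma}{n-2} + (n-1)(f^2-1)u^2$, where $R_\sigma$ is the scalar curvature of the induced metric $\sigma$. In Minkowski ($f\equiv 1$) the condition is exactly constant scalar curvature in the conformal class of the round metric, and Theorem \ref{Liouville} (for $n=3$), respectively Theorem \ref{well-known}, identifies $u$ as supported in the $\ell\le 1$ modes; this is the Obata uniqueness statement advertised in the introduction. The task for general $f$ is therefore to show that the warping factor cannot destroy this rigidity.

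Third, and this is the main step, I would differentiate the constant equation, contract with $r^{n-1}\tilde\na^a u$, and integrate by parts over $S^{n-1}$ exactly as in the derivation of \eqref{Ricci identity1}, using the Ricci identity with $Ric(g_{S^{n-1}}) = (n-2)g_{S^{n-1}}$. The aim is an identity of the schematic form
\[ 0 = \int_{S^{n-1}} \lt[ W_1\,(f^2 - 1 - r f f')\,|\tilde\na u|^2 - W_2\,\Big| \tilde\na^2 u - \tfrac{\tilde\Delta u}{n-1}\tilde\sigma \Big|^2 \rt] + (\text{nonlinear remainder}), \]
with positive weights $W_1, W_2 > 0$. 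Since \eqref{differential inequality} gives $f^2 - 1 - r f f' \le 0$ and the trace-free Hessian term is manifestly nonpositive, both contributions must vanish: either $u$ is constant, in which case $r$ is constant and $\Sigma$ is literally a sphere $t=t_0, r=r_0$; or equality holds in \eqref{differential inequality}, forcing $(V,\bar g)$ to be a Lorentz space form and $\tilde\na^2 u = \frac{\tilde\Delta u}{n-1}\tilde\sigma$. In the latter case Theorem \ref{well-known} applied to $(S^{n-1},\tilde\sigma)$ yields $u = c_1 + c_2\,\tilde X\cdot a$, so $\frac1r$ is supported in the $\ell\le 1$ modes, and Theorem \ref{isometries} concludes that $\Sigma$ is a sphere of symmetry.

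The hard part is the nonlinear remainder. Unlike the infinitesimal equation of Section 3, which is linear in $u$, the constant-norm equation is quadratic, so the integration by parts produces cubic and quartic terms in $u$ (coming from $|\tilde\na u|^2$ and from $f^2 u^2$ with $f = f(1/u)$) that have no counterpart in \eqref{Ricci identity1}. I would need to show that these terms reorganize into the trace-free Hessian square and the $(f^2-1-rff')$-weighted gradient term, or else are themselves dominated in sign by \eqref{differential inequality}; establishing this favorable cancellation, and confirming that the weights $W_1, W_2$ remain positive for every admissible warping factor, is where the real work lies. As a cross-check I expect this computation to collapse, when $n=3$ and $f\equiv 1$, to the Bochner identity $\tilde\Delta\big(u^2 + u\tilde\Delta u - |\tilde\na u|^2\big) = u\,\tilde\Delta(\tilde\Delta+2)u - |\,\tilde\na^2 u - \tfrac12\tilde\Delta u\,\tilde\sigma\,|^2$ underlying the analytic proof of Theorem \ref{Liouville}.
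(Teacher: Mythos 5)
Your proposal is, in outline, exactly the paper's proof: the paper reduces the hypothesis via $\tr\chi=n-1$ to the same scalar equation $(n-1)f^2u^2-(n-1)|\tilde\na u|^2+2u\tilde\Delta u=E$ on $(S^{n-1},\tilde\sigma)$, differentiates it, integrates against the weight you propose, and finishes with the same dichotomy through Theorem \ref{well-known} and Theorem \ref{isometries}. The only thing separating your write-up from a proof is the step you yourself defer as ``the real work,'' and that step is the entire content of the theorem; as submitted, the decisive cancellation is conjectured rather than established.

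For the record, it closes exactly as you predict, with \emph{no} remainder at all. The paper applies $\tilde\Delta$ to the equation, uses Bochner with $Ric(\tilde\sigma)=(n-2)\tilde\sigma$ to obtain \eqref{Ricci identity2}, multiplies by $u^{2-n}=r^{n-2}$ and integrates; since $\tilde\na(u^{2-n})=(2-n)u^{1-n}\tilde\na u$, this is (up to the factor $n-2$) identical to your plan of pairing the differentiated equation with $r^{n-1}\tilde\na^a u$. With this precise weight, the two cubic terms $2u\tilde\Delta^2u$ and $-(2n-6)\,\tilde\na u\cdot\tilde\na\tilde\Delta u$ annihilate each other upon integration by parts: each produces a multiple of $u^{2-n}(\tilde\Delta u)^2+(2-n)u^{1-n}|\tilde\na u|^2\tilde\Delta u$, with coefficients $2(3-n)$ and $2n-6$ respectively. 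The warping term $\tilde\Delta\big((n-1)f^2u^2\big)$, integrated by parts and combined with $-2(n-1)(n-2)u^{2-n}|\tilde\na u|^2$, yields
\begin{align*}
(n-1)(n-2)\,u^{1-n}\lt(-(f^2)'+\frac{2}{r}(f^2-1)\rt)|\tilde\na u|^2
= 2(n-1)(n-2)\,r^{n-2}\lt(f^2-1-rff'\rt)|\tilde\na u|^2,
\end{align*}
which is your $W_1$-term with $W_1=2(n-1)(n-2)r^{n-2}>0$; and the Hessian terms assemble into $-2(n-1)\,r^{n-2}\lt|\tilde\na^2u-\tfrac{\tilde\Delta u}{n-1}\tilde\sigma\rt|^2$, i.e.\ $W_2=2(n-1)r^{n-2}>0$. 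Both integrands are then nonpositive by \eqref{differential inequality}, and the endgame (either $u$ constant, hence $\Sigma$ a sphere $t=t_0,r=r_0$; or equality in \eqref{differential inequality} plus vanishing trace-free Hessian, handled by Theorems \ref{well-known} and \ref{isometries}) is precisely how the paper concludes. Your $n=3$, $f\equiv1$ cross-check with the Liouville computation is also consistent with this. So: right approach, right skeleton, right guess for the final identity --- but the computation on which everything hinges still had to be done.
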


\begin{proof}
With our choice of $L$ and $\Lb$, $\tr\chi = n-1$. Since $|\vec{H}|^2 = \tr\chi \tr\chib$, the equation we want to investigate is
\begin{align}
E = \frac{1}{r^2} \Big( (n-1)(f^2+|\na r|^2) - 2r \Delta r \Big)
\end{align}
for some constant $E$. As in the proof of Theorem \ref{inf CNNC rigidity}, we rewrite the equation with respect to the standard metric $\tilde{\sigma}$ on $S^{n-1}$ and let $u=\frac{1}{r}$. We obtain
\begin{align*}
E &= \frac{1}{r^2} \Big( (n-1)\lt( f^2 + \frac{|\tilde{\na} r|^2}{r^2} \rt) -2r \lt( \frac{\tilde{\Delta} r}{r^2} + (n-3) \frac{|\tilde{\na} r|^2}{r^3} \rt) \Big) \\
&=u^2 \lt( (n-1)f^2 - (n-1) \frac{|\tilde{\na} u|^2}{u^2} + \frac{2}{u} \tilde{\Delta}u \rt).
\end{align*}
Taking Laplacian, we get
\begin{align}\label{Ricci identity2}\begin{split}
0 &= \tilde{\Delta} \big( (n-1)f^2u^2 \big) - 2(n-1) \lt( |\tilde{\na}^2 u|^2 - \frac{1}{n-1}(\tilde{\Delta} u)^2 \rt) \\
&\quad -(2n-6) \tilde{\na} \tilde{\Delta} u \cdot \tilde{\na} u - 2(n-1)(n-2) |\tilde{\na} u|^2 + 2u \tilde{\Delta}^2 u. \end{split}
\end{align}
Here we use the Ricci identity with $Ric(g_{S^{n-1}}) = g_{S^{n-1}}$. We multiply $u^{2-n}$ and integrate by parts on $(S^{n-1},g_{S^{n-1}})$ to get 
\begin{align*}
0 &= \int_{S^{n-1}} \Bigg[ \frac{(n-1)(n-2)}{u^{n-1}} \tilde{\na}_a \big( f^2u^2 \big) \tilde{\na}^a u \\
&\qquad\qquad - \frac{2(n-1)}{u^{n-2}} \lt( |\tilde{\na}^2 u|^2 - \frac{1}{n-1} (\tilde{\Delta} u)^2 \rt) \\
&\qquad\qquad - \frac{2(n-1)(n-2)}{u^{n-2}} |\tilde{\na} u|^2 \Bigg] \\
&= \int_{S^{n-1}} \Bigg[ \frac{(n-1)(n-2)}{u^{n-1}} \lt( -(f^2)' + \frac{2}{r}(f^2-1) \rt) |\tilde{\na} u|^2 \\
&\qquad\qquad - \frac{2(n-1)}{ u^{n-2}} \lt| \tilde{\na}^2 u - \frac{1}{n-1} (\tilde{\Delta} u) \tilde{\sigma} \rt|^2 \Bigg]  
\end{align*}
Hence $u = const.$ is the only solution unless $\frac{f^2-1}{r^2} - \frac{ff'}{r} =0$. In the latter case, $(V,\bar g)$ is a Lorentz space form, and $u$ is supported in $\ell \le 1$ modes (see Definition \ref{modes}) by Theorem \ref{well-known}. The assertion then follows from Theorem \ref{isometries}.
\end{proof}

The above argument proves the uniqueness of constant scalar curvature metrics in conformal geometry. 
\begin{theorem}
Let $n \ge 2$. Suppose $(\Sigma^n, \sigma)$ is a closed manifold with constant Ricci curvature, $Ric(\sigma) = c\sigma$. Let $\bar{\sigma} = r^2 \sigma$ be a conformal metric with constant scalar curvature, where $r$ is a smooth positive function. Then $r$ must be constant unless $(\Sigma,\sigma)$ is isometric to the standard sphere $(S^n, \sigma_c)$ and 
\[ r(x) = (c_1 + c_2 x \cdot a)^{-1}\]
for some constants $c_1, c_2$ and point $a \in S^n$.
\end{theorem}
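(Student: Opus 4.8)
The plan is to follow the proof of Theorem \ref{constant mean curvature vector norm} in spirit, with the warping factor $f$ replaced by the constant scalar curvature of the Einstein background. Since $Ric(\sigma)=c\sigma$, the scalar curvature $R=nc$ is constant. Setting $u=1/r$, so that $\bar\sigma=u^{-2}\sigma$, the conformal transformation law for the scalar curvature reads
\begin{align*}
\bar R = nc\,u^2 + 2(n-1)u\Delta u - n(n-1)|\nabla u|^2,
\end{align*}
where $\nabla$ and $\Delta$ are taken with respect to $\sigma$ and $\bar R$ is the (constant) scalar curvature of $\bar\sigma$; for $n=2$ this is exactly the constant Gauss curvature equation \eqref{PSG''}. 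This is the intrinsic analogue of the equation for $|\vec H|^2$ in the proof of Theorem \ref{constant mean curvature vector norm}, with the constant $nc$ in place of the expression involving $f$.

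First I would apply $\Delta$ to both sides to remove the constant $\bar R$, and expand the right-hand side by the Bochner formula
\begin{align*}
\Delta|\nabla u|^2 = 2|\nabla^2 u|^2 + 2\langle\nabla u,\nabla\Delta u\rangle + 2Ric(\nabla u,\nabla u),
\end{align*}
substituting the Einstein relation $Ric(\nabla u,\nabla u)=c|\nabla u|^2$. This produces a pointwise identity on $\Sigma$ among $|\nabla^2 u|^2$, $(\Delta u)^2$, $\langle\nabla u,\nabla\Delta u\rangle$, $u\Delta^2 u$, $u\Delta u$ and $|\nabla u|^2$, exactly mirroring the Ricci-identity computation in \eqref{Ricci identity2}.

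The decisive step is to multiply by a weight $u^{p}$ and integrate by parts over the closed manifold $\Sigma$. Repeated integration by parts converts $\int_\Sigma u^{p}u\Delta^2 u$ and $\int_\Sigma u^{p}\langle\nabla u,\nabla\Delta u\rangle$ into multiples of $\int_\Sigma u^{p}(\Delta u)^2$ together with gradient integrals, and all the gradient-type terms (those proportional to $\int_\Sigma u^{p}|\nabla u|^2$ and $\int_\Sigma u^{p-1}\Delta u\,|\nabla u|^2$) are found to carry a common factor $1-n-p$. Choosing $p=1-n$ makes this factor vanish, and, using $|\nabla^2 u|^2 - \tfrac1n(\Delta u)^2 = |\nabla^2 u - \tfrac{\Delta u}{n}\sigma|^2$, what survives is
\begin{align*}
0 = -\,n(n-1)\int_\Sigma u^{1-n}\left|\nabla^2 u - \frac{\Delta u}{n}\sigma\right|^2.
\end{align*}
The main obstacle is precisely this bookkeeping: verifying that $p=1-n$ is the power that annihilates every gradient term. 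That such a cancellation must occur is forced by the round sphere, on which the non-constant functions $u=c_1+c_2\,x\cdot a$ solve the equation with vanishing trace-free Hessian but $|\nabla u|^2\not\equiv 0$, so no term proportional to $\int_\Sigma u^{1-n}|\nabla u|^2$ can survive.

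Since $n(n-1)>0$ for $n\ge 2$, the identity forces the trace-free Hessian to vanish identically, $\nabla^2 u=\frac{\Delta u}{n}\sigma$. Theorem \ref{well-known} then yields the dichotomy: either $u$ is constant, so that $r$ is constant; or $(\Sigma,\sigma)$ is isometric to a round sphere $(S^n,\sigma_c)$ and $u=c_1+c_2\,x\cdot a$, whence $r=(c_1+c_2\,x\cdot a)^{-1}$. For $n=2$ this recovers the Liouville-type Theorem \ref{Liouville}, so the single computation covers all $n\ge 2$ uniformly.
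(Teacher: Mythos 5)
Your proposal is correct and follows essentially the same route as the paper's own proof: the substitution $u=1/r$, the conformal transformation law $\bar R = nc\,u^2 + 2(n-1)u\Delta u - n(n-1)|\nabla u|^2$, taking the Laplacian and applying the Bochner formula with $Ric=c\sigma$, then multiplying by the weight $u^{1-n}$ and integrating by parts so that all terms cancel except $-\,n(n-1)\int_\Sigma u^{1-n}\bigl|\nabla^2 u - \tfrac{\Delta u}{n}\sigma\bigr|^2$, after which Theorem \ref{well-known} gives the dichotomy. The only (immaterial) difference is normalization: the paper divides the transformation law by $n-1$ before differentiating, so its final integrand carries the coefficient $-n$ rather than $-n(n-1)$.
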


\begin{rmk}
The case $n=2$ is just Theorem \ref{Liouville}. The case $n\ge 3$ is known as Obata's Theorem \cite[Theorem 6.2]{Obata}, which Obata proved by studying the traceless Ricci tensor. The proof given below is not new but rather than a special case of Veron-Veron \cite[Theorem 6.1]{BVV}. We are indebted to Professor Xiaodong Wang for bringing \cite{BVV} to our attention. See also Section 2 of \cite{W} for another proof.
\end{rmk}
\begin{proof}
Let $u=\frac{1}{r}$. The scalar curvature under conformal transformation satisfies \cite[(D.9), page 446]{Wald}
\[\bar{R} = u^2 \lt( nc + 2(n-1) \lt( \frac{\Delta u}{u} - \frac{|\na u|^2}{u^2}\rt) - (n-1)(n-2) \frac{|\na u|^2}{u^2} \rt). \]
Simplify the formula, and we get
\[\frac{\bar{R}}{n-1} = \frac{nc}{n-1}u^2 + 2u \Delta u -n|\na u|^2. \]
By the assumption, $\bar{R} = constant$. Taking Laplacian on both sides and using the Bochner formula, we get
\begin{align*}
0 &= \frac{nc}{n-1}(u\Delta u + |\na u|^2) + (\Delta u)^2 + 2 \na u \cdot \na\Delta u + u \Delta^2 u \\
&\quad - n(|\na^2 u|^2 + \na u \cdot \na\Delta u + c |\na u|^2) \\
&= -n (|\na^2 u|^2 - \frac{1}{n}(\Delta u)^2) + (2-n)\na u \cdot \na\Delta u + u \Delta^2 u \\
&\quad + \frac{nc}{n-1}(u\Delta u + (2-n)|\na u|^2).
\end{align*}
Multiplying by $u^{1-n}$ and integrating by parts, we obtain
\[ 0 = \int_\Sigma - \dfrac{n}{u^{n-1}} \lt| \na^2 u - \frac{1}{n}(\Delta u) \sigma \rt|^2 \]
and the assertion follows from Theorem \ref{well-known}.
\end{proof}

Note that the characterization of constant mean curvature norm submanifold also holds in the static warped product spacetimes described in the end of last section, since the argument only uses the Ricci curvature.

\appendix
\section{Curvature of static spherically symmetric spacetimes}
The purpose of this appendix is to compute the (spacetime) curvature of the warped product $(-\infty,\infty) \times (r_1,r_2)\times N$ with metric
\begin{align}\label{warped product}
\bar g = -f^2(r)dt^2 + \frac{1}{f^2(r)}dr^2 + r^2 \tilde g
\end{align}
where the base $(N,\tilde g)$ is an $(n-1)$-dimensional Riemannian manifold. Our convention of curvature is
\begin{align*}
\bar R(X,Y)Z &= D_X D_Y Z - D_Y D_X Z - D_{[X,Y]}Z\\
\bar R(X,Y,Z,W) &=  \bar g (R(X,Y)W,Z ),
\end{align*} 
and in local coordinates,
\begin{align*}
\bar R \lt( \frac{\pl}{\pl x^\alpha}, \frac{\pl}{\pl x^\beta} \rt) \frac{\pl}{\pl \gamma} &= \bar R_{\alpha\beta\;\;\gamma}^{\;\;\;\;\;\delta} \frac{\pl}{\pl x^\delta}\\
\bar R_{\alpha\beta\epsilon\gamma} &= \bar g_{\epsilon\delta} \bar R_{\alpha\beta\;\;\gamma}^{\;\;\;\;\;\delta} = \bar R \lt( \frac{\pl}{\pl x^\alpha},\frac{\pl}{\pl x^\beta},\frac{\pl}{\pl x^\epsilon},\frac{\pl}{\pl x^\gamma} \rt). 
\end{align*}

Let $\theta^a, a = 1,\cdots, n-1$ be a local coordinate system of $N$. The nonzero Christoffel symbols are given by
\begin{align*}
\Gamma_{tt}^r = -\frac{f'}{f}, \Gamma_{tr}^t = \frac{f'}{f}, \Gamma_{rr}^r = -\frac{f'}{f}, \Gamma_{ar}^b = \frac{1}{r}\delta_a^b, \Gamma_{ab}^r = -f^2 r \tilde g_{ab}, \Gamma_{ab}^c = \tilde\Gamma_{ab}^c.
\end{align*}
The nonzero curvature components \[ \bar R_{\alpha\beta\;\;\gamma}^{\;\;\;\;\;\delta} = \frac{\pl}{\pl x^\alpha} \Gamma_{\beta\gamma}^\delta - \frac{\pl}{\pl x^\beta} \Gamma_{\alpha\gamma}^\delta + \Gamma_{\alpha\epsilon}^\delta\Gamma_{\beta\gamma}^\epsilon - \Gamma_{\beta\epsilon}^\delta\Gamma_{\alpha\gamma}^\epsilon \] are
\begin{align*}
\bar R_{trrt} &= -ff'' - (f')^2\\
\bar R_{tabt} &= -rf^3f' \tilde g_{ab}\\
\bar R_{rabr} &= \frac{f'}{f}r^2 \tilde g_{ab}\\
\bar R_{abdc} &= r^2 \tilde R_{abdc} + r f^2 (\tilde g_{ac}\tilde g_{bd} - \tilde g_{ad}\tilde g_{bc}).	 
\end{align*}
In Eddinton--Finkelstein coordinates, recalling $\frac{\pl}{\pl v} = \frac{1}{2}(\frac{\pl}{\pl t} + f^2\frac{\pl}{\pl r})$ and $\frac{\pl}{\pl w} = \frac{1}{2}(\frac{\pl}{\pl t} - f^2\frac{\pl}{\pl r})$, the above translates into
\begin{align*}
\bar R \lt( \frac{\pl}{\pl w},\frac{\pl}{\pl v}, \frac{\pl}{\pl w},\frac{\pl}{\pl v} \rt) &= -\frac{f^4}{4} \lt(ff''+(f')^2 \rt)\\
\bar R \lt( \frac{\pl}{\pl w},\frac{\pl}{\pl\theta^a},\frac{\pl}{\pl\theta^b},\frac{\pl}{\pl v} \rt) &= -\frac{1}{2} r f^3f'\tilde g_{ab}\\
\bar R \lt( \frac{\pl}{\pl w},\frac{\pl}{\pl\theta^a},\frac{\pl}{\pl\theta^b},\frac{\pl}{\pl w} \rt) &= \bar R \lt(\frac{\pl}{\pl v},\frac{\pl}{\pl\theta^a},\frac{\pl}{\pl\theta^b},\frac{\pl}{\pl v} \rt) =0. 
\end{align*}

\begin{prop}\label{curvature}
Let $\Sigma$ be a spacelike codimension 2 submanifold lying the standard null cone in an $(n+1)$-dimensional spacetime with metric \eqref{warped product}. Let $\sigma$ denote the induced metric of $\Sigma$. Then \begin{align*}
\sigma^{ab} \bar R(\Lb,\pl_a,\pl_b,L) &= -2(n-1)\frac{ff'}{r} \\
\bar R(\Lb,\pl_a,L,\Lb) &= \frac{4}{r} \frac{\pl r}{\pl\theta^a} \lt( -(ff')' + \frac{ff'}{r} \rt).
\end{align*}  
\end{prop}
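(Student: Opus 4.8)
The plan is to prove both identities by a single direct tensorial computation: substitute the explicit Eddington--Finkelstein expressions for $L = \frac{2r}{f^2}\frac{\pl}{\pl v}$, for $\Lb = \frac{1}{r}\lt(2\frac{\pl}{\pl w} + f^2\na v - \frac{f^2}{2}|\na v|^2\frac{\pl}{\pl v}\rt)$, and for the tangent frame $\pl_a = \frac{\pl v}{\pl\theta^a}\frac{\pl}{\pl v} + \frac{\pl}{\pl\theta^a}$ into the $(0,4)$ curvature tensor, expand by multilinearity, and evaluate each resulting monomial using the curvature components already computed above in the $(v,w,\theta)$ basis together with the algebraic symmetries of $\bar R$. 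The organizing principle is a short list of vanishing rules: $\bar R$ vanishes whenever one of its two skew pairs repeats a null direction (so $\bar R(\cdot,\cdot,\pl_v,\pl_v)=0$), whenever it carries an odd number of angular slots, on the components $\bar R(\pl_v,\pl_{\theta^a},\pl_{\theta^b},\pl_v)$ and $\bar R(\pl_w,\pl_{\theta^a},\pl_{\theta^b},\pl_w)$ listed as zero, and on the ``mixed'' block $\bar R(\pl_w,\pl_v,\pl_{\theta^a},\pl_{\theta^b})$ (absent from the list). I would record these rules first and apply them before computing any coefficients, since they collapse each sum to very few terms.

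For the first identity I would note that $L$ occupies the fourth slot and is proportional to $\frac{\pl}{\pl v}$, so any monomial feeding a $\frac{\pl}{\pl v}$ into the second or third slot hits a repeated null pair or the vanishing component $\bar R(\pl_v,\pl_\theta,\pl_\theta,\pl_v)$; hence $\pl_a$ and $\pl_b$ must both contribute their angular parts $\pl_{\theta^a},\pl_{\theta^b}$. Among the three pieces of $\Lb$, the angular piece then produces three angular slots (odd, hence zero) and the $\frac{\pl}{\pl v}$ piece produces $\bar R(\pl_v,\pl_\theta,\pl_\theta,\pl_v)=0$, so only the $\frac{2}{r}\frac{\pl}{\pl w}$ piece survives. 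The sum collapses to $\frac{4}{f^2}\sigma^{ab}\bar R(\pl_w,\pl_{\theta^a},\pl_{\theta^b},\pl_v)$, and inserting $\bar R(\pl_w,\pl_{\theta^a},\pl_{\theta^b},\pl_v)=-\tfrac12 rf^3f'\tilde g_{ab}$ together with $\sigma^{ab}=r^{-2}\tilde g^{ab}$, so that $\sigma^{ab}\tilde g_{ab}=(n-1)/r^2$, yields exactly $-2(n-1)\frac{ff'}{r}$.

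For the second identity the expansion of $\bar R(\Lb,\pl_a,L,\Lb)$ is longer because $\Lb$ appears twice, but the same vanishing rules leave only two monomials. The first takes the $\frac{\pl}{\pl w}$ part from both copies of $\Lb$ and $\frac{\pl v}{\pl\theta^a}\frac{\pl}{\pl v}$ from $\pl_a$, giving a multiple of $\bar R(\pl_w,\pl_v,\pl_v,\pl_w)$; the second takes $\frac{\pl}{\pl w}$ from the first $\Lb$, $\pl_{\theta^a}$ from $\pl_a$, and the angular piece $\frac{f^2}{r}\na v$ from the second $\Lb$, giving a multiple of $\bar R(\pl_w,\pl_{\theta^a},\pl_v,\pl_{\theta^d})$. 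Every other monomial is eliminated — for instance the opposite angular arrangement gives $\bar R(\pl_\theta,\pl_\theta,\pl_v,\pl_w)$, which after symmetries is of the forbidden $\pl_w\pl_v\pl_\theta\pl_\theta$ type, and any term with a $\frac{\pl}{\pl v}$ in the fourth slot repeats a null pair. I would then rewrite the two surviving components as the listed ones using the skew symmetry $\bar R(\pl_w,\pl_v,\pl_v,\pl_w)=-\bar R(\pl_w,\pl_v,\pl_w,\pl_v)$, contract $\na v$ against $\tilde g_{ad}$ to recover $\frac{\pl v}{\pl\theta^a}$, and finally invoke the relation $\frac{\pl r}{\pl\theta^a}=\frac{f^2}{2}\frac{\pl v}{\pl\theta^a}$ — valid because on $w=w_0$ the area radius depends on $\theta$ only through $v$, with $\frac{dr}{dr^*}=f^2$ and $r^*=\frac12(v-w)$. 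Combining the two terms and using $(f')^2+ff''=(ff')'$ produces $\frac{4}{r}\frac{\pl r}{\pl\theta^a}\lt(-(ff')'+\frac{ff'}{r}\rt)$.

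The hard part is purely organizational: the second computation is a sum of many coefficient-weighted monomials, and the proof stands or falls on correctly identifying the two survivors. The cleanest safeguard is to apply the vanishing rules symbolically first and only afterwards track the normalizations $\frac{2r}{f^2}$, $\frac{2}{r}$, $\frac{f^2}{r}$ and the conversion between $\frac{\pl v}{\pl\theta^a}$ and $\frac{\pl r}{\pl\theta^a}$. The one place demanding genuine care is the sign bookkeeping in the second identity, which is governed entirely by the skew symmetry in the last two slots; getting that interchange right is what turns $(f')^2+ff''$ into the $-(ff')'$ appearing in the statement.
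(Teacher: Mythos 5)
Your plan is structurally the paper's own proof: the same multilinear expansion of $\bar R(\Lb,\pl_a,\pl_b,L)$ and $\bar R(\Lb,\pl_a,L,\Lb)$, the same warped-product vanishing rules (implicit in the paper, usefully made explicit by you), the same two surviving monomials in the second identity, and the same conversion $\frac{\pl r}{\pl\theta^a}=\frac{f^2}{2}\frac{\pl v}{\pl\theta^a}$. Your treatment of the first identity is complete and correct.

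The second identity, however, does not close as you describe it, and the failure is exactly at the step you flag as delicate. The surviving null-null monomial is $\bar R(\pl_w,\pl_v,\pl_v,\pl_w)$ carrying the positive coefficient $\frac{2}{r}\cdot\frac{2r}{f^2}\cdot\frac{2}{r}\cdot\frac{\pl v}{\pl\theta^a}=\frac{16}{rf^4}\frac{\pl r}{\pl\theta^a}$. If you take the appendix's printed value $\bar R(\pl_w,\pl_v,\pl_w,\pl_v)=-\frac{f^4}{4}\lt(ff''+(f')^2\rt)$ at face value and apply the skew symmetry you quote, you obtain $\bar R(\pl_w,\pl_v,\pl_v,\pl_w)=+\frac{f^4}{4}\lt(ff''+(f')^2\rt)$, so this term contributes $+\frac{4}{r}(ff')'\frac{\pl r}{\pl\theta^a}$ and your final answer would be $\frac{4}{r}\frac{\pl r}{\pl\theta^a}\lt(+(ff')'+\frac{ff'}{r}\rt)$, i.e.\ the wrong sign on the $(ff')'$ term; your claim that the interchange ``turns $(f')^2+ff''$ into $-(ff')'$'' is the opposite of what your cited inputs yield. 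The resolution is that the printed $(w,v,w,v)$ component is itself misprinted: converting the correctly printed $\bar R_{trrt}=-\lt(ff''+(f')^2\rt)$ through $\pl_v=\frac12(\pl_t+f^2\pl_r)$ and $\pl_w=\frac12(\pl_t-f^2\pl_r)$ gives $\bar R(\pl_w,\pl_v,\pl_w,\pl_v)=+\frac{f^4}{4}\lt(ff''+(f')^2\rt)$, hence $\bar R(\pl_w,\pl_v,\pl_v,\pl_w)=-\frac{f^4}{4}\lt(ff''+(f')^2\rt)$, which is the value the paper's own proof in effect uses. (One can confirm this corrected sign independently: only with it does the first identity, which you did prove, remain consistent with Ricci-flatness of Schwarzschild after contracting with $\sigma^{ab}\pl_a\otimes\pl_b=\bar g^{-1}+\frac12(L\otimes\Lb+\Lb\otimes L)$.) So to make your argument sound you must recompute the $(v,w)$ block from the $t,r$ components rather than cite the printed list; as written, a faithful execution of your proposal either derives the formula with the wrong sign or halts at an undetected contradiction.
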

\begin{proof}
Recall that we take
\begin{align*}
L &= \frac{2r}{f^2} \frac{\pl}{\pl v} \\
\Lb &= \frac{1}{r} \lt( 2 \frac{\pl}{\pl w} + f^2 \na v - \frac{f^2}{2}|\na v|^2 \frac{\pl}{\pl v} \rt)
\end{align*}
along $\Sigma$ where $\na v = \sigma^{ab} \frac{\pl v}{\pl\theta^a} \frac{\pl}{\pl\theta^b}$. Moreover, $\pl_a = \frac{\pl v}{\pl\theta^a}\frac{\pl}{\pl v} + \frac{\pl}{\pl\theta^a}$. We compute
\begin{align*}
\bar R (\Lb,\pl_a,\pl_b,L) &= \frac{2}{f^2} \bar R \lt( 2\frac{\pl}{\pl w},\frac{\pl}{\pl\theta^a},\frac{\pl}{\pl\theta^b},\frac{\pl}{\pl v} \rt) = -2 r ff' \tilde g_{ab}.
\end{align*}
Since $\sigma_{ab} = r^2 \tilde g_{ab}$, the first formula follows. For the second formula, we note that on $\Sigma,$ $v = 2r^* + const.$ and hence $\frac{\pl v}{\pl\theta^a} = \frac{2}{f^2}\frac{\pl r}{\pl\theta^a}$. Hence,
\begin{align*}
&\bar R(\Lb,\pl_a,L,\Lb) \\
&= \frac{2}{rf^2} \lt( \bar R \lt( 2\frac{\pl}{\pl w},\frac{\pl}{\pl\theta^a},\frac{\pl}{\pl v}, f^2 \na^b v \frac{\pl}{\pl\theta^b} \rt) + \bar R \lt( 2\frac{\pl}{\pl w},\frac{\pl v}{\pl\theta^a}\frac{\pl}{\pl v}, \frac{\pl}{\pl v}, 2\frac{\pl}{\pl w} \rt) \rt)\\
&= \frac{2}{rf^2} \lt( 2\frac{f^3}{r}f' - 2 f^2 (ff'' + (f')^2) \rt)\frac{\pl r}{\pl\theta^a}.
\end{align*}
\end{proof}

\end{document}